\providecommand{\U}[1]{\protect\rule{.1in}{.1in}}
\newtheorem{theorem}{Theorem}
\theoremstyle{plain}
\newtheorem{acknowledgement}{Acknowledgement}
\newtheorem{corollary}{Corollary}
\newtheorem{lemma}{Lemma}
\newtheorem{proposition}{Proposition}
\numberwithin{equation}{section}
\begin{document}
\title[On van der Corput property of shifted primes]{On van der Corput
property of shifted primes}
\author{Sini\v{s}a Slijep\v{c}evi\'{c}}
\address{Department of Mathematics, Bijeni\v{c}ka 30, Zagreb,\ Croata}
\email{slijepce@math.hr}
\urladdr{}
\date{October 27, 2011}
\subjclass[2000]{Primary 11P99; Secondary 37A45}
\keywords{S\'{a}rk\"{o}zy theorem, recurrence, primes, difference sets,
positive definiteness, van der Corput property, Fourier analysis}

\begin{abstract}
We prove that the upper bound for the van der Corput property of the set of
shifted primes is $O((\log n)^{-1+o(1)})$, giving an answer to a problem
considered by Ruzsa and Montgomery for the set of shifted primes $p-1$. We
construct normed non-negative valued cosine polynomials with the spectrum in
the set $p-1$, $p\leq n$, and a small free coefficient $a_{0}=O((\log
n)^{-1+o(1)})$. This implies the same bound for the intersective property of
the set $p-1$, and also bounds for several properties related to uniform
distribution of related sets.
\end{abstract}

\maketitle

\section{Introduction}

We say that a set of integers $\mathcal{S}$ is a \textit{van der Corput} (or 
\textit{correlative}) set, if given a real sequence $(x_{n})_{n\in N}$, if
all the sequences $(x_{n+d}-x_{n})_{n\in N}$, $d\in \mathcal{S}$, are
uniformly distributed $\func{mod}1$, then the sequence $(x_{n})_{n\in N}$ is
itself uniformly distributed $\func{mod}1$. The property was introduced by
Kamae and Mend\`{e}s France (\cite{Kamae:77}), and is important as it is
closely related to the intersective property of integers, discussed below.
Classical examples of van der Corput sets are sets of squares, shifted
primes $p+1$, $p-1$, and also sets of values $P(n)$, where $P$ is any
polynomial with integer coefficients, and has a solution of $P(n)\equiv 0(%
\func{mod}k$) for all $k$. All van der Corput sets are intersective sets,
but the converse does not hold, as was shown by Bourgain (\cite{Bourgain:87}%
).

We first recall the key characterization of the van der Corput property. If $%
\mathcal{S}$ is a set of positive integers, then let $\mathcal{S}_{n}=%
\mathcal{S}\cap \{1,...,n\}$. We denote by $\mathcal{T}(\mathcal{S})$ the
set of all cosine polynomials%
\begin{equation}
T(x)=a_{0}+\tsum_{d\in \mathcal{S}_{n}}a_{d}\cos (2\pi dx)\text{,}
\label{r:d0}
\end{equation}%
$T(0)=1$, $T(x)\geq 0$ for all $x$, where $n$ is any integer and $a_{0}$, $%
a_{d}$ are real numbers (i.e. $T$ is a non-negative normed cosine polynomial
with the spectrum in $\mathcal{S}\cup \{0\}$). Kamae and Mend\`{e}s France
proved that a set is a van der Corput set if and only if (\cite{Kamae:77}, 
\cite{Montgomery:94})%
\begin{equation}
\inf_{T\in \mathcal{T}(\mathcal{S})}a_{0}=0.  \label{r:d2}
\end{equation}

We can define a function which measures how quickly a set is becoming a van
der Corput set with%
\begin{equation}
\gamma (n)=\inf_{T\in \mathcal{T}(\mathcal{S}_{n})}a_{0},  \label{r:dgamma}
\end{equation}%
and then a set is van der Corput if and only if $\gamma (n)\rightarrow 0$ as 
$n\rightarrow \infty $.

Ruzsa and Montgomery set a problem of finding any upper bound for the
function $\gamma $ for any non-trivial van der Corput set (\cite%
{Montgomery:94}, unsolved problem 3; \cite{Ruzsa:84a}). Ruzsa in \cite%
{Ruzsa:81} announced the result that for the set of squares, $\gamma
(n)=O((\log n)^{-1/2})$, but the proof was never published. The author in 
\cite{Slijepcevic:09} proved that for the set of squares, $\gamma
(n)=O((\log n)^{-1/3})$. In this paper we prove the following result:

\begin{theorem}
\label{t:main01}If $\mathcal{S}$ is the set of shifted primes $p-1$, then $%
\gamma (n)=O((\log n)^{-1+o(1)})$.
\end{theorem}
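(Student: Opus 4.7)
The plan is to construct an explicit element of $\mathcal{T}(\mathcal{S}_n)$ with small constant term via a Hardy--Littlewood circle method analysis of a weighted exponential sum over primes. The pivotal observation is that if $F$ is a real cosine polynomial whose spectrum lies in $\mathcal{S}_n$, with $F(0) > 0$, and if $F(x) \geq -\eta F(0)$ pointwise, then
\[ T(x) := \frac{F(x) + \eta F(0)}{(1+\eta)F(0)} \]
lies in $\mathcal{T}(\mathcal{S}_n)$ with $a_0 = \eta/(1+\eta)$. So it suffices to exhibit such an $F$ with $\eta = O((\log n)^{-1+o(1)})$.

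The natural candidate is the $W$-tricked sum
\[ F(x) = \sum_{\substack{p \leq n\\ p \equiv 1\,(\bmod W)}} \Lambda(p)\cos(2\pi(p-1)x), \qquad W = \prod_{p \leq w} p. \]
Restricting to $p \equiv 1 \pmod W$ ensures that every $a/q$ with $q \mid W$ becomes a positive peak (because $(p-1)a/q \in \mathbb{Z}$), so the only troublesome rationals are those with $\gcd(q,W) = 1$, in which case $q \geq q^{*} > w$. On minor arcs, Vaughan's identity combined with Vinogradov's minor-arc estimate gives $|F(x)| \ll F(0)(\log n)^{-A}$ for any fixed $A$. On major arcs near $a/q$ with $\gcd(q,W) = 1$, the Siegel--Walfisz theorem (applicable for $Wq \leq (\log n)^{A}$) yields
\[ F(a/q) = F(0)\cdot\frac{\mu(q)\cos(2\pi a/q)}{\varphi(q)} + (\text{smaller terms}), \]
which is at worst $-F(0)/\varphi(q) \geq -F(0)/w$. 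Together these bounds give $\eta \leq (1+o(1))/w$.

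The core difficulty is that Siegel--Walfisz forces $W \leq (\log n)^{A}$ for some fixed $A$, hence $w \lesssim \log\log n$, so the naked $W$-trick yields only $\gamma(n) = O(1/\log\log n)$, far from the $(\log n)^{-1+o(1)}$ target. To bridge this gap, the effective ``forbidden prime factor'' range must be pushed from $w \sim \log\log n$ up to $w' \sim \log n / (\log\log n)^{O(1)}$. My approach would be to replace the hard $W$-trick by a Selberg (or truncated divisor) sieve weighting whose associated singular series mimics the $W$-trick for a substantially larger range of $q$, relying on Bombieri--Vinogradov-type estimates on average rather than Siegel--Walfisz pointwise. The resulting polynomial still has spectrum in $\mathcal{S}_n$, because the sieve weight multiplies a prime indicator, and the $(\log n)^{o(1)}$ slack in the target can absorb the sieve's distortion of the main term.

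The main obstacle is controlling the sieve-weighted $F$ uniformly on major arcs in the extended range $(\log n)^{A} < q \leq \log n$, where Siegel--Walfisz is no longer available pointwise. One must either convert the required pointwise lower bound on $F$ into an $\ell^{2}$ statement controllable by Bombieri--Vinogradov, or neutralise the possible contribution of an exceptional Siegel zero by an ineffective step. The delicate point is showing that the Selberg smoothing does not introduce new negative major-arc peaks exceeding $(\log n)^{-1+o(1)}$, and that the cumulative negative excursions over all $q$ in the extended range still fit within the $(\log n)^{o(1)}$ budget; verifying this quantitative uniformity is where the bulk of the technical work lies.
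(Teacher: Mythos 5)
Your reduction at the start is correct and matches the paper's setup: it does suffice to exhibit a nonnegative-shifted cosine polynomial $F$ with spectrum in $\{p-1 : p \leq n\}$ and $F \geq -\eta F(0)$ pointwise, and the object $\sum_{p\equiv 1\,(\mathrm{mod}\,d)}\log p\cdot \cos(2\pi(p-1)\theta)$ normalized at $0$ (the paper's $F_{N,d}$) is indeed essentially your $W$-tricked sum. You also correctly diagnose the bottleneck: a single modulus $W$, controlled pointwise on major arcs by Siegel--Walfisz, only pushes the worst negative peak down to roughly $-1/w$ with $w \lesssim \log\log n$, which stalls at $\gamma(n) = O(1/\log\log n)$.

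However, the route you propose from there on is not what the paper does, and is left essentially unexecuted. Two ideas are missing. First, the paper does not weight a single sum by a Selberg-type sieve; it instead averages over a \emph{family} of moduli $\mathcal{D}$: take $d^* = \prod_{p\le p^-} p$ and let $\mathcal{D}$ consist of all $d^*d$ with $d$ square-free, a product of exactly $l$ primes from the window $(p^-,p^+]$ with $p^+ \asymp 1/\delta$, $l \asymp \log(1/\delta)$, $p^- \asymp l^2$, with weights $w(d^*d)\propto 1/\varphi(d)$. The main term attached to $F_{N,d}$ at $a/q$ is $\tau(d,q)$ ($1$ if $q\mid d$, $0$ if $q$ not square-free, $-1/\varphi(q/(q,d))$ otherwise), and the content of Theorems~\ref{t:cancelling}--\ref{t:cancelling2} is a purely combinatorial estimate showing $\sum_{d\in\mathcal{D}} w(d)\tau(d,q) \ge -\delta/2$ for every $q$. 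That is, the cancellation that your sieve weight is supposed to supply is here achieved by convexly combining ordinary $W$-tricked sums over many moduli, using binomial/factorial growth estimates, not by Bombieri--Vinogradov on average. Second, you do not address the scale-selection problem: for a fixed $(N,R)$ the major-arc error is $O(dN/R)$ while the minor-arc error involves $\sqrt{R/N}$, and no single pair controls both uniformly in $\theta$; the paper resolves this by a pigeonhole over a geometric ladder $N_j = N_0 D_1^{8j}$, $j\le O(1/\delta)$, costing at most one scale $j_0$ a trivial $-1$, and averaging $T=\frac{1}{m}\sum_{d,j} w(d)F_{d,N_j}$. Without these two mechanisms — the combinatorial moduli average and the multi-scale ladder — your argument has a genuine gap: the ``delicate point'' you flag at the end (controlling negative major-arc excursions in the extended $q$-range) is precisely the step your outline does not supply, and the sieve/Bombieri--Vinogradov route would require substantial new work to close it. The exceptional zero, incidentally, is handled in the paper not by an ineffective step but by the Ruzsa--Sanders dichotomy: if an exceptional modulus $d_D \le D_0$ exists, one simply forces $d_D \mid d^*$ and reruns the same combinatorics.
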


The gap between the upper bound and the best available lower bound remains
very large, as in the case of the sets of recurrence discussed below. The
lower bound below relies on a construction of Ruzsa \cite{Ruzsa84b}:

\begin{theorem}
\label{t:main02}If $\mathcal{S}$ is the set of shifted primes $p-1$, then $%
\gamma (n)\,\gg n^{\left( -1+\frac{\log 2-\varepsilon }{\log \log n}\right)
} $, where $\varepsilon >0$ is an arbitrary real number.
\end{theorem}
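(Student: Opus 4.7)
The plan is to obtain the lower bound by Fourier-analytic duality coupled with Ruzsa's sieve-theoretic construction from \cite{Ruzsa84b}. First I would reduce the lower bound on $\gamma(n)$ to the existence of a non-negative measure on $\mathbb{T}$ whose Fourier coefficients are uniformly negative on $\mathcal{S}_n$, and then construct such a measure by adapting Ruzsa's idea.

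The duality reduction runs as follows. For any non-negative measure $\mu$ on $\mathbb{T}$ whose Fourier coefficients satisfy $\operatorname{Re}\widehat{\mu}(d)=-\lambda$ on $\mathcal{S}_n$, integrating $T\ge 0$ against $\mu$ and using $T(0)=1=a_0+\sum_d a_d$ gives $\int T\,d\mu = \mu(\mathbb{T})\,a_0-\lambda(1-a_0)\ge 0$, so $a_0\ge \lambda/(\mu(\mathbb{T})+\lambda)$. Rescaling $\mu$ to a probability measure $\nu$, it suffices to exhibit $\nu$ with $\operatorname{Re}\widehat{\nu}(d)$ equal to a common value $s<0$ on $\mathcal{S}_n$, in which case $\gamma(n)\ge |s|/(1+|s|)\sim |s|$ for small $|s|$. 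Thus the target is to construct $\nu$ with $|s|\gg n^{-1+(\log 2-\varepsilon)/\log\log n}$.

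The construction of $\nu$ follows \cite{Ruzsa84b}. Fix $y=y(n)$ and $q=\prod_{p\le y}p$, so $\log q\sim y$ by the prime number theorem. The arithmetic key is that for any prime $p'>y$ and any prime $p\le y$, $p\nmid p'$, hence $p'-1\not\equiv -1\pmod{p}$; therefore every shifted prime $d=p'-1$ with $p'>y$ lies, modulo $q$, in $E=\{e\in\mathbb{Z}/q\mathbb{Z}:\gcd(e+1,q)=1\}$. Using the Chinese Remainder Theorem, one builds $\nu$ as a convolution $\nu=\ast_{p\le y}\nu_p$ of local measures supported on $\tfrac{1}{p}\mathbb{Z}/\mathbb{Z}$, each $\nu_p$ a two-element uniform measure on $\mathbb{Z}/p$ chosen so that $\widehat{\nu}_p(d)$ contributes a controlled negative factor for every $d\not\equiv -1\pmod{p}$. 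Multiplicativity of the Fourier transform under CRT then yields $\operatorname{Re}\widehat{\nu}(d)$ of magnitude $\asymp 2^{\pi(y)}/q$ uniformly in $d\in E$; choosing $\pi(y)\sim(\log n)/\log\log n$ (i.e.\ $y\sim\log n$) gives $q\asymp n^{1+o(1)}$ and $2^{\pi(y)}/q\gg n^{-1+(\log 2-\varepsilon)/\log\log n}$.

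The main obstacle, handled in \cite{Ruzsa84b}, is to verify that the product $\prod_p \widehat{\nu}_p(d)$ is \emph{uniformly} negative of the claimed magnitude on all shifted primes in $E$, not merely on average, despite possibly irregular small-prime factorizations of $p'-1$; the correct sign and the factor $2^{\pi(y)}$ both require a careful choice of the local supports. The finitely many exceptional shifted primes $p'-1$ with $p'\le y$ (numbering $\pi(y)=o(|\mathcal{S}_n|)$) fall outside $E$ but can be absorbed into the $\varepsilon$ slack either by truncating $T$ to the spectrum $\mathcal{S}_n\setminus\{p-1:p\le y\}$ or by a minor finite modification of $\nu$.
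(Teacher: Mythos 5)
Your approach is genuinely different from the paper's, and unfortunately it has gaps that the paper's route avoids. The paper's proof is very short: it cites Ruzsa \cite{Ruzsa84b}, Section 5, for the existence of a set $A\subset[1,n]$ with $|A|\gg n^{(\log 2-\varepsilon)/\log\log n}$ whose difference set contains no shifted prime; it then tiles $A$ with period $2n$ to produce a set $B$ of upper Banach density $\gg n^{-1+(\log 2-\varepsilon)/\log\log n}$ with $(B-B)\cap\mathcal{S}_n=\emptyset$, which gives $\alpha(n)\gg n^{-1+(\log 2-\varepsilon)/\log\log n}$; finally it invokes the inequality $\gamma(n)\ge\alpha(n)$ from Ruzsa \cite{Ruzsa:84a}. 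All the hard analytic work is thus delegated to two cited results, and no Fourier construction is carried out at all.

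Your duality reduction, as you state it, is correct but requires $\operatorname{Re}\widehat{\nu}(d)$ to equal a \emph{common} value $s<0$ for every $d\in\mathcal{S}_n$ — mere uniform negativity or the right order of magnitude is not enough, because the coefficients $a_d$ in the test polynomial $T$ may have either sign, so $\sum_d a_d\operatorname{Re}\widehat{\nu}(d)$ cannot be controlled from one-sided bounds alone. Your construction does not deliver this: for $\nu=\ast_{p\le y}\nu_p$ the value $\widehat{\nu}(d)=\prod_p\widehat{\nu}_p(d)$ depends on the residues $d\bmod p$, which vary substantially over shifted primes $d=p'-1$ (in particular $\gcd(d,q)$ is highly nonuniform over $E$), so neither the magnitude nor the sign of $\widehat{\nu}(d)$ is constant on $E$. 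Moreover the local requirement as you phrase it — ``$\widehat{\nu}_p(d)$ a controlled negative factor for every $d\not\equiv -1\pmod p$'' — is already impossible, since $d\equiv 0$ falls in that range yet $\widehat{\nu}_p(0)=\nu_p(\mathbb{T})=1>0$ for any probability measure $\nu_p$. Finally, the appeal to \cite{Ruzsa84b} to handle ``the main obstacle'' is misplaced: Ruzsa there constructs a difference-avoiding \emph{set}, not a measure with prescribed Fourier coefficients, so the hard step of your argument is not actually contained in the cited source. To repair the argument along your lines you would in effect have to reprove, via Fourier positivity, the inequality $\gamma\ge\alpha$ together with Ruzsa's sieve bound simultaneously; the paper sidesteps all of this by quoting both as black boxes.
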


\textbf{Structure of the proof and its limitations.} We define a cosine
polynomial%
\begin{equation}
F_{N,d}(\theta )=\frac{1}{k}\func{Re}\sum_{\substack{ p\leq dN+1  \\ p\equiv
1(\func{mod}d)}}\log p\cdot e((p-1)\theta ),  \label{r:defF}
\end{equation}%
where $e(\theta )=\exp (2\pi i\theta )$ and $k$ is chosen so that $%
F_{N,d}(0)=1$. We show in Sections 2 and 3 by using exponential sum
estimates along major and minor arcs that%
\begin{equation*}
F_{N,d}(\theta )\geq \tau (d,q)+E(d,q,\kappa ,N).
\end{equation*}%
Here $\kappa =\theta -a/q$, the function $E$ is the error term and $\tau
(d,q)$ is the principal part which is (for square-free $d$) $1$ for $q|d$, $%
0 $ if $q$ not square-free, and $-1/\varphi (q/(q,d))$ otherwise ($\varphi $
being the Euler's totient function and $(q,d)$ the greatest common divisor).
In Section 4 we demonstrate that for a given $\delta >0$, one can find a
collection of positive integers $\mathcal{D}$ not exceeding $\exp ((\log
1/\delta )^{2+o(1)})$ and weights $\sum_{d\in \mathcal{D}}w(d)=1$ such that
for any integer $q>0$,%
\begin{equation*}
\sum_{d\in \mathcal{D}}w(d)\tau (d,q)\geq -\delta /2\text{.}
\end{equation*}%
In addition, one can find constants $R,N$ not exceeding $O(\exp ((\log
1/\delta )^{4+o(1)}))$ for any given $\theta $ such that if $a/q$ is the
Dirichlet's approximation of $\theta =a/q+\kappa $, $\kappa \leq 1/(qR)$,
then the error term $|E(d,q,\kappa ,N)|\leq \delta /2$. This seemingly
implies effectively the same upper bound for $\gamma (n)$ as obtained in 
\cite{Ruzsa:08} for a stronger intersective property of sets of integers
(see below).

Unfortunately, in our calculations the constants $R,N$ can not be chosen so
that for all $\theta \in \boldsymbol{T}=\boldsymbol{R}/\boldsymbol{Z}$ the
error term is small. Namely, for $d\theta $ close to an integer, the error
term is $O(dN/R)$, and for $\theta $ on minor arcs, the error term is $%
O(d^{2}\sqrt{R}/\sqrt{N})$. We resolve it by choosing a geometric sequence
of constants $N_{1},...,N_{4/\delta }$, which results with the bound in
Theorem \ref{t:main01}. We finalize the proof in Section 5 by constructing
the required cosine polynomial as a convex combination of $F_{N,d}$ over $%
d\in \mathcal{D}$ and $N_{j}$.

\textbf{Applications.}We say a set $\mathcal{S}$ is \textit{intersective set}
(or a \textit{set of recurrence}, or a \textit{Poincar\'{e} set)}, if for
any set $A$ of integers with positive upper Banach density 
\begin{equation*}
\rho (A)=\lim \sup_{n\rightarrow \infty }|A\cap \lbrack 1,n]|/n>0,
\end{equation*}%
its difference set $A-A$ contains an element of $\mathcal{S}$. Given any set
of integers $\mathcal{S}$, one can define the function $\alpha :\boldsymbol{N%
}\rightarrow \lbrack 0,1]$ as $\alpha (n)=\sup \rho (A)$, where $A$ goes
over all sets of integers whose difference set does not contain an element
of $\mathcal{S}\cap \lbrack 1,n]$ (equivalent definitions of $\alpha $ can
be found in \cite{Ruzsa:84a}). A set is an intersective set if and only if $%
\lim_{n\rightarrow \infty }\alpha (n)=0$. Ruzsa in \cite{Ruzsa:84a} also
proved that if $\mathcal{S}$ is a van der Corput set, then it is also an
intersective set, and%
\begin{equation*}
\alpha (n)\leq \gamma (n).
\end{equation*}

The bound $\alpha (n)=O((\log n)^{-1+o(1)})=O(\exp ((-1+o(1))\log \log n))$
for the set of shifted primes follows then as a corollary of Theorem \ref%
{t:main01}. This is worse than the bound $\alpha (n)=O(\exp (-c\sqrt[4]{\log
n}))$ obtained by Ruzsa and Sanders in \cite{Ruzsa:08}, but better than
earlier bounds in \cite{Lucier:08} and \cite{Sarkozy:78b}.

The function $\gamma (n)$ has different characterizations and further
applications discussed in detail in \cite{Montgomery:94}. We discuss in
Section 9 the Heilbronn property of the set of shifted primes, which
specifies how well the expression $x(p-1)$ can approximate integers
uniformly in $x\in \boldsymbol{R}$, by choosing for a given $x$ some prime $%
p\leq n$ so that $x(p-1)$ is as close to an integer as possible.

\section{The major arcs}

If $\Lambda $ is the von-Mangoldt function, we define as in \cite{Ruzsa:08}%
\begin{equation*}
\Lambda _{N,d}(x):=\left\{ 
\begin{array}{cc}
\Lambda (dx+1) & \text{if }1\leq x\leq N \\ 
0 & \text{otherwise,}%
\end{array}%
\right.
\end{equation*}%
and let $\Lambda _{N}(x)=\Lambda _{N,1}(x)$. The Fourier transform $\widehat{%
.}:l^{1}(\boldsymbol{Z})\rightarrow L^{\infty }(\boldsymbol{R})$ is defined
as the map which takes $f\in l^{1}(\boldsymbol{Z})$ to $\widehat{f}(\theta
)=\sum_{x\in \boldsymbol{Z}}f(x)\overline{e(x\theta )}$, thus $\widehat{%
\Lambda _{N,d}}(\theta )$ is the exponential sum%
\begin{equation*}
\widehat{\Lambda _{N,d}}(\theta )=\sum_{x\leq N}\Lambda (dx+1)\overline{%
e(x\theta )}\text{.}
\end{equation*}

The classical estimates for Fourier transforms of $\Lambda _{N,d}(x)$ were
optimized by Ruzsa and Sanders to the class of problems studied in this
paper. They studied two cases related to the generalized Riemann hypothesis:
given a pair of integers $D_{1}\geq D_{0}\geq 2$, then there either exists
an exceptional Dirichlet character of modulus $d_{D}$ $\leq D_{0}$ or not (%
\cite{Ruzsa:08}, Proposition 4.7). They then obtained the following
estimates (we will be more specific below on the assumptions):\ if $\kappa
=\theta -a/q$, where $\theta \in \boldsymbol{T}$, then%
\begin{eqnarray}
\left\vert \widehat{\Lambda _{N,d}}(\theta )\right\vert &\leq &\frac{|\tau
_{a,d,q}|}{\varphi (q)}\widehat{\Lambda _{N,d}}(0)+O\left( (1+|\kappa
|N)E_{N,D_{1}}\right) \text{,}  \label{r:rs1} \\
\left\vert \widehat{\Lambda _{N,d}}(0)\right\vert &\gg &\frac{N}{\varphi (d)}%
+O\left( E_{N,D_{1}}\right) \text{,}  \label{r:rs2}
\end{eqnarray}%
where%
\begin{eqnarray*}
E_{N,D_{1}} &=&ND_{1}^{2}\exp \left( -\frac{c_{1}\log N}{\sqrt{\log N}+\log
D_{1}}\right) , \\
\tau _{a,d,q} &=&\sum_{\substack{ m=0  \\ (md+1,q)=1}}^{q-1}e\left( m\frac{a%
}{q}\right) \text{.}
\end{eqnarray*}

\begin{proposition}
\label{p:rusa}(Ruzsa, Sanders). There is an absolute constant $c_{1}$ such
that for any pair of integers $D_{1}\geq D_{0}\geq 2$, one of the following
possibilities hold:

(i)\ ($(D_{1},D_{0})$ is exceptional). There is an integer $d_{D}\leq D_{0}$%
, such that for all non-negative integers $N,a,q,d$, where $1\leq dq\leq
D_{1}$, $d_{D}|d$, and $(a,q)=1$, for any $\theta \in \boldsymbol{T}$ (\ref%
{r:rs1}), (\ref{r:rs2}) hold, where $\kappa =\theta -a/q$.

(ii)\ ($(D_{1},D_{0})$ is unexceptional). For all non-negative integers $%
N,a,q,d$, where $1\leq dq\leq D_{0}$ and $(a,q)=1$, for any $\theta \in 
\boldsymbol{T}$ (\ref{r:rs1}), (\ref{r:rs2}) hold, where $\kappa =\theta
-a/q $.
\end{proposition}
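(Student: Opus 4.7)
The proposition is a major-arc estimate in the style of Hardy-Littlewood-Vinogradov, adapted to the twisted von Mangoldt function $\Lambda(dx+1)$, so my plan is to unfold the sum via residue classes modulo $q$ and then apply an effective Siegel-Walfisz bound. Writing $\theta = a/q + \kappa$ with $(a,q) = 1$ and splitting $x \le N$ by residue class, one obtains
\begin{equation*}
\widehat{\Lambda_{N,d}}(\theta) = \sum_{m=0}^{q-1} e(-ma/q) \sum_{\substack{x \le N \\ x \equiv m\,(q)}} \Lambda(dx+1)\, e(-x\kappa).
\end{equation*}
The condition $(dm+1, q) > 1$ forces $\Lambda(dx+1)$ to vanish for all but at most $O(\log N)$ values of $x$ in the inner sum, so only the $m$'s with $(dm+1, q) = 1$ contribute substantively; this recovers exactly the index set defining $\tau_{a,d,q}$.

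Next I would use partial summation to decouple the twist $e(-x\kappa)$ from the prime-counting function, reducing matters to estimating $\psi(z; dq, dm+1)$ uniformly for $z \le dN+1$ and $dq \le D_1$. The expected main term $z/\varphi(dq)$, reassembled across the surviving residues $m$ using the relation $\varphi(dq) = \varphi(d)\varphi(q)$ (after restricting to the relevant primitive classes), collapses to $\tau_{a,d,q}\, \widehat{\Lambda_{N,d}}(0)/\varphi(q)$, while partial summation multiplies the resulting Siegel-Walfisz error by $\min(N, 1/|\kappa|)$, producing the factor $1 + |\kappa| N$ in (\ref{r:rs1}). Setting $\theta = 0$ (so $q = 1$, $\tau_{a,d,1} = 1$) removes the partial summation step and yields (\ref{r:rs2}) from the same computation.

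The main obstacle is securing a uniform Siegel-Walfisz estimate of the shape
\begin{equation*}
\psi(z; k, a) - \frac{z}{\varphi(k)} \ll z \exp\!\left(-\frac{c \log z}{\sqrt{\log z} + \log k}\right)
\end{equation*}
for moduli $k = dq \le D_1$, with explicit polynomial tracking of $D_1$. The Landau-Page theorem permits at most one exceptional real character $\chi^* \pmod{d_D}$, of conductor $d_D \le D_0$, to violate the requisite zero-free region. In the unexceptional case no such $\chi^*$ exists in the range $D_0$, and the bound holds unconditionally, yielding (ii). Otherwise, imposing $d_D \mid d$ forces $dx + 1 \equiv 1 \pmod{d_D}$ for every $x$ in the sum, hence $\chi^*(dx+1) = 1$, so that $\chi^*$ behaves like the principal character throughout the explicit formula expansion and its potentially catastrophic contribution merges with the main term rather than surviving as an uncontrolled error; this gives (i) at the cost of the restriction $d_D \mid d$ and the polynomial factor $D_1^2$ absorbing the weaker zero-free region available up to $D_1$. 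The delicate book-keeping step is verifying that all polynomial losses in $D_1$ and uniformities in $d, q, a$ fit precisely into the prescribed $E_{N,D_1}$; one calibrates constants exactly as in Ruzsa and Sanders's proof of their Proposition 4.7.
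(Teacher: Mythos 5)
The paper offers no proof of its own: it cites Ruzsa and Sanders (\cite{Ruzsa:08}, Propositions 5.3 and 5.5), noting that (\ref{r:rs1}) appears explicitly at the end of their proof of Proposition 5.3, and the exceptional/unexceptional dichotomy is their Proposition 4.7. Your sketch is therefore not an alternative to anything in this paper but a reconstruction of the argument those references contain. As a reconstruction it is broadly on the right track: orthogonality to split $\widehat{\Lambda_{N,d}}$ by residue class modulo $q$, partial summation to extract the factor $1+|\kappa|N$, an effective Siegel--Walfisz bound with the polynomial modulus-dependence folded into $E_{N,D_1}$, and the Landau--Page dichotomy neutralized in the exceptional case by forcing $d_D\mid d$ so that $\chi^*(dx+1)=1$. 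Three loose ends to tighten. The identity $\varphi(dq)=\varphi(d)\varphi(q)$ holds only when $(d,q)=1$; in general one has $\varphi(dq)\geq\varphi(d)\varphi(q)$, which happens to point the right way for the upper bound (\ref{r:rs1}), but you should say so rather than invoke an identity that fails. The bound (\ref{r:rs2}) carries the symbol $\gg$, so it is a lower bound that cannot be obtained by specializing the upper bound (\ref{r:rs1}) at $\theta=0$; it comes from the same Siegel--Walfisz evaluation but read as a lower bound on the main term, modulo the error $E_{N,D_1}$. Finally, your sketch does not explain the asymmetry of the two cases: the estimate is asserted for $dq\leq D_0$ when unexceptional but for the larger range $dq\leq D_1$ when exceptional and $d_D\mid d$. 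That asymmetry is precisely the quantitative content of the Landau--Page step (there can be no second exceptional modulus in a longer range once one is found in a shorter one) and should be made explicit rather than left to "calibrating constants."
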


\begin{proof}
(\cite{Ruzsa:08}), Propositions 5.3. and 5.5. (Note that (\ref{r:rs1}) is
explicitly obtained at the end of the proof of Proposition 5.3.)
\end{proof}

We now define a function $\tau $ closely related to $\tau _{a,d,q}$ above,
which will be the main term when estimating cosine polynomials $F_{N,d}$. Let%
\begin{equation}
\tau (d,q)=\left\{ 
\begin{array}{ll}
1, & q|d \\ 
0, & (d,r)>1\text{ or }r\text{ not square-free} \\ 
-1/\varphi (r) & \text{otherwise,}%
\end{array}%
\right.  \label{r:deftau}
\end{equation}%
where $r=q/(q,d)$. Note that for $d$ square-free, the second row condition
above is equivalent to $q$ being not square-free.

\begin{lemma}
\label{l:tau}Let $a,d,q$ be positive integers, $(a,q)=1$, $r=q/(q,d)>1$ and $%
a^{\ast }=ad/(q,d)$. Then%
\begin{equation}
\frac{|\tau _{a^{\ast },d,r}|}{\varphi (r)}=|\tau (d,q)|\text{.}
\label{r:help}
\end{equation}
\end{lemma}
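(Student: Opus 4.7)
The plan is to unfold the definition
\[
\tau_{a^{*},d,r} = \sum_{\substack{m=0\\ (md+1,r)=1}}^{r-1} e\!\left(\frac{m a^{*}}{r}\right)
\]
and split into two cases according to whether $(d,r)=1$. A preliminary observation, used in both, is that $(a^{*},r)=1$. Indeed, writing $g=(q,d)$, any prime $p\mid r=q/g$ satisfies $\nu_p(q)>\nu_p(d)$ (otherwise $\nu_p(g)=\nu_p(q)$ and $p\nmid r$), so $\nu_p(g)=\nu_p(d)$ and $\nu_p(a^{*})=\nu_p(a)+\nu_p(d)-\nu_p(g)=\nu_p(a)=0$ because $(a,q)=1$ and $p\mid q$.

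When $(d,r)=1$, multiplication by $d$ is a bijection of $\mathbb{Z}/r$, so the substitution $k\equiv md+1\pmod r$, i.e.\ $m\equiv \bar d(k-1)\pmod r$ with $\bar d d\equiv 1\pmod r$, rewrites the sum as a Ramanujan sum up to a unit phase:
\[
\tau_{a^{*},d,r} = e\!\left(-\frac{a^{*}\bar d}{r}\right) c_r(a^{*}\bar d).
\]
Since $(a^{*}\bar d,r)=1$, the classical evaluation $c_r(n)=\mu(r/(r,n))\varphi(r)/\varphi(r/(r,n))$ collapses to $\mu(r)$, giving $|\tau_{a^{*},d,r}|=|\mu(r)|$. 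Dividing by $\varphi(r)$ reproduces $|\tau(d,q)|$ in both remaining subcases of \eqref{r:deftau}: it equals $1/\varphi(r)$ exactly when $r$ is square-free, and $0$ otherwise.

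When $(d,r)>1$, I would decompose $r=r_1 r_2$ with $r_1=\prod_{p\mid (d,r)} p^{\nu_p(r)}$, so that $r_1>1$, every prime of $r_1$ divides $d$, $(r_2,d)=1$, and $(r_1,r_2)=1$. For primes $p\mid r_1$ the congruence $md+1\equiv 0\pmod p$ is impossible, so the coprimality condition on $m$ depends only on $m\bmod r_2$. The Chinese remainder theorem then factors the sum into a product whose $r_1$-factor is the complete geometric sum $\sum_{m_1=0}^{r_1-1} e(a^{*}\overline{r_2}\, m_1/r_1)$; this vanishes provided $r_1\nmid a^{*}\overline{r_2}$, which holds because $(\overline{r_2},r_1)=1$ and $(a^{*},r_1)=1$ by the preliminary observation, while $r_1>1$. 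Therefore $\tau_{a^{*},d,r}=0$, matching $|\tau(d,q)|=0$.

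The main obstacle is the bookkeeping in the second case: cleanly splitting the coprimality condition across $r_1$ and $r_2$ under the CRT, and verifying $(a^{*},r_1)=1$ through the interplay of the $p$-adic valuations of $a,d,q$, and $g$. Once that is in hand, the identifications with Ramanujan sums, respectively with vanishing geometric sums, are routine.
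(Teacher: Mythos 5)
Your proof is correct and follows essentially the same route as the paper: reduce $\tau_{a^{*},d,r}$ to a Ramanujan sum when $(d,r)=1$ and invoke the standard facts that $c_r(n)=0$ for $r$ not square-free and $|c_r(n)|=1$ for $(n,r)=1$ with $r$ square-free. The only difference is that you re-derive, via the substitution $k\equiv md+1$ and the CRT factorization for the $(d,r)>1$ case, the identity $\tau_{a,d,q}=c_q(a)\,e(-m_{d,q}a/q)$ for $(d,q)=1$ (and its vanishing otherwise) that the paper simply quotes from Ruzsa and Sanders, and you spell out the valuation check that $(a^{*},r)=1$, which the paper leaves implicit.
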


\begin{proof}
As was noted in \cite{Ruzsa:08}, Section 5,%
\begin{equation*}
\tau _{a,d,q}=\left\{ 
\begin{array}{ll}
c_{q}(a)e(-m_{d,q}a/q) & \text{if }(d,q)=1 \\ 
0 & \text{otherwise,}%
\end{array}%
\right.
\end{equation*}%
where $c_{q}(a)$ is the Ramanujan sum and $m_{d,q}$ is a solution of $%
m_{d,q}d\equiv 1(\func{mod}q)$. Now if $q|d$, $\tau _{a^{\ast },d,r}=\tau
_{a^{\ast },d,1}=1$, thus both sides of (\ref{r:help}) are equal to $1$. If $%
(d,r)>1$, then $\tau _{a^{\ast },d,r}=0$, and if $r$ not square-free, then $%
\tau _{a^{\ast },d,r}=0$ as the Ramanujan sum $c_{r}(a^{\ast })=0$ when $r$
not square-free. The remaining case follows from $(a^{\ast },r)=1$, $r$
square-free implying that the Ramanujan sum $|c_{r}(a^{\ast })|=1$.
\end{proof}

It is easy to see that there exists a constant $c_{2}$ depending only on $%
c_{1}$ such that if%
\begin{equation}
\log N\geq c_{2}(\log D_{1})^{2},  \label{r:new1}
\end{equation}%
then%
\begin{equation}
D_{1}^{2}\exp \left( -\frac{c_{1}\log N}{\sqrt{\log N}+\log D_{1}}\right)
\leq \frac{1}{D_{1}^{2}}\text{.}  \label{r:new2}
\end{equation}

We first discuss the case of $q$ not dividing $d$, and then $q|d$.

\begin{proposition}
\label{p:major}Assume all the assumptions of Proposition \ref{p:rusa} hold
for $D_{0},D_{1},\theta ,N,a,q,d,\kappa $, and in addition (\ref{r:new1}), (%
\ref{r:new2}). If $q$ not dividing $d$, then%
\begin{equation*}
F_{N,d}(\theta )\geq \tau (d,q)+O\left( \frac{1}{D_{1}}+|\kappa |N\right) 
\text{.}
\end{equation*}
\end{proposition}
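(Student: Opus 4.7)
The plan is to express $F_{N,d}(\theta)$ in terms of the exponential sum $\widehat{\Lambda_{N,d}}(-d\theta)$ and then bound it via Proposition~\ref{p:rusa} together with Lemma~\ref{l:tau}. Substituting $p=dx+1$ in (\ref{r:defF}) and replacing the prime-indicator sum by the von Mangoldt function, at the cost of the standard prime-power error of order $\sqrt{dN}\log(dN)$, yields
\[
F_{N,d}(\theta) = \frac{1}{k}\operatorname{Re}\widehat{\Lambda_{N,d}}(-d\theta) + O\!\left(\frac{\sqrt{dN}\log(dN)}{k}\right),
\]
where, by $F_{N,d}(0)=1$, the normalising constant equals $k=\widehat{\Lambda_{N,d}}(0)+O(\sqrt{dN}\log(dN))$. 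The bound (\ref{r:rs2}) combined with (\ref{r:new1})--(\ref{r:new2}) absorbs $E_{N,D_{1}}$ and gives $k\gg N/\varphi(d)$.

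Next I would put $-d\theta$ in the form required by Proposition~\ref{p:rusa}. Setting $r=q/(q,d)$ and $a^{\ast}\equiv -ad/(q,d)\pmod r$, the relations $(a,q)=1$ and $(d/(q,d),r)=1$ give $(a^{\ast},r)=1$, and
\[
-d\theta = \frac{a^{\ast}}{r} + \kappa^{\ast},\qquad \kappa^{\ast} = -d\kappa.
\]
Since $dr\le dq$, and the divisibility condition $d_{D}\mid d$ depends only on $d$, all hypotheses of Proposition~\ref{p:rusa} transfer to the triple $(a^{\ast},r,d)$, yielding
\[
\bigl|\widehat{\Lambda_{N,d}}(-d\theta)\bigr|\le \frac{|\tau_{a^{\ast},d,r}|}{\varphi(r)}\,\widehat{\Lambda_{N,d}}(0)+O\bigl((1+d|\kappa|N)E_{N,D_{1}}\bigr).
\]
Using $|\tau_{a,d,q}|=|\tau_{-a,d,q}|$ (since $\tau_{a,d,q}=c_{q}(a)e(-m_{d,q}a/q)$ and $|c_{q}(-a)|=|c_{q}(a)|$), Lemma~\ref{l:tau} identifies the leading coefficient with $|\tau(d,q)|$.

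The hypothesis $q\nmid d$ gives $r>1$, so by (\ref{r:deftau}) one has $\tau(d,q)\le 0$ and $-|\tau(d,q)|=\tau(d,q)$. Combining the preceding estimates through $\operatorname{Re} z\ge -|z|$ then gives
\[
F_{N,d}(\theta)\ge \tau(d,q) + O\!\left(\frac{(1+d|\kappa|N)E_{N,D_{1}}+\sqrt{dN}\log(dN)}{N/\varphi(d)}\right).
\]
Invoking (\ref{r:new2}) so that $E_{N,D_{1}}\ll N/D_{1}^{2}$, and $\varphi(d)\le d\le D_{1}$, the principal part of the error becomes $d/D_{1}^{2}+d^{2}|\kappa|N/D_{1}^{2}=O(1/D_{1}+|\kappa|N)$; the residual prime-power contribution $\varphi(d)\sqrt{d}\log(dN)/\sqrt{N}$ is absorbed into $O(1/D_{1})$ once (\ref{r:new1}) is applied with $c_{2}$ sufficiently large. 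I expect the main obstacle to be the arithmetic bookkeeping of the middle step: one has to verify carefully that $r=q/(q,d)$ paired with $a^{\ast}$ really forms an admissible Dirichlet datum for Proposition~\ref{p:rusa}, and that the absolute value $|\tau_{a^{\ast},d,r}|$ appearing there matches exactly the quantity Lemma~\ref{l:tau} computes, so that the sign information $\tau(d,q)\le 0$ genuinely produces a \emph{lower} bound rather than a trivial one.
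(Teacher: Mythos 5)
Your proposal is correct and follows essentially the same path as the paper's proof: replace the sum over primes by the von Mangoldt function at the cost of a $\ll\sqrt{dN}$ error (the paper cites the classical $\psi-\vartheta\ll\sqrt{dN}$ bound, without the $\log$ factor, but this makes no difference), pass from $F_{N,d}(\theta)$ to $\operatorname{Re}\widehat{\Lambda_{N,d}}(\pm d\theta)/\widehat{\Lambda_{N,d}}(0)$, invoke (\ref{r:rs1})--(\ref{r:rs2}) with the transformed datum $(a^{\ast},r)$, apply Lemma~\ref{l:tau}, and use $\tau(d,q)\le 0$ together with $\operatorname{Re}z\ge-|z|$ and $d\le D_1$ to recover a lower bound of the stated form. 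The extra sign discussion ($-d\theta$ vs.\ $d\theta$, $|\tau_{-a,d,q}|=|\tau_{a,d,q}|$) is harmless since only real parts and absolute values enter; the paper simply works with $d\theta$ directly.
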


\begin{proof}
If we write%
\begin{eqnarray*}
\psi (x;q,a) &=&\sum_{\substack{ n\leq x  \\ n\equiv a(\func{mod}q)}}\Lambda
(n), \\
\vartheta (x;q,a) &=&\sum_{\substack{ p\leq x  \\ p\equiv a(\func{mod}q)}}%
\log (p),
\end{eqnarray*}%
then $\widehat{\Lambda _{N,d}}(0)=\psi (Nd+1;d,1)$ and $k=\vartheta
(Nd+1;d,1)$ where $k$ is the denominator in (\ref{r:defF}). By the
well-known property of functions $\psi ,\vartheta $ (see e.g. \cite%
{Montgomery:07}, p.381),%
\begin{equation*}
\psi (Nd+1;d,1)-\vartheta (Nd+1;d,1)\ll \sqrt{dN}\text{.}
\end{equation*}%
Relations (\ref{r:rs2}), (\ref{r:new2}) and $\varphi (d)<D_{1}$ imply that%
\begin{equation}
\frac{N}{\left\vert \widehat{\Lambda _{N,d}}(0)\right\vert }\ll D_{1}\text{.}
\label{r:lambda0}
\end{equation}%
If we use the shorthand notation $F=\func{Re}\sum_{_{p\leq dN+1,p\equiv 1(%
\func{mod}d)}}\log p\cdot e((p-1)\theta )$, and then $F_{N,d}(\theta )=F/k$,
we see from definitions that $F$ is approximately $\func{Re}\widehat{\Lambda
_{N,d}}(d\theta )$, or more precisely 
\begin{equation*}
|\func{Re}\widehat{\Lambda _{N,d}}(d\theta )-F|\leq \widehat{\Lambda _{N,d}}%
(0)-k\ll \sqrt{dN}.
\end{equation*}%
Putting these three inequalities together,%
\begin{equation}
\left\vert \frac{F}{k}-\frac{\func{Re}\widehat{\Lambda _{N,d}}(d\theta )}{%
\widehat{\Lambda _{N,d}}(0)}\right\vert \leq \left\vert \frac{F}{k}%
\right\vert \frac{|\widehat{\Lambda _{N,d}}(0)-k|}{|\widehat{\Lambda _{N,d}}%
(0)|}+\frac{|\func{Re}\widehat{\Lambda _{N,d}}(d\theta )-F|}{|\widehat{%
\Lambda _{N,d}}(0)|}\ll \frac{\sqrt{d}}{\sqrt{N}}D_{1}\text{.}
\label{r:relf}
\end{equation}%
Now if $\theta -a/q=\kappa $, then $d\theta -a^{\ast }/r=d\kappa $, where $%
a^{\ast }=ad/(d,q)$, $r=q/(d,q)$. Combining (\ref{r:rs1}), (\ref{r:rs2}), (%
\ref{r:new2}) and (\ref{r:lambda0}) we easily get that%
\begin{equation*}
\left\vert \frac{\widehat{\Lambda _{N,d}}(d\theta )}{\widehat{\Lambda _{N,d}}%
(0)}\right\vert \leq \frac{|\tau _{a^{\ast },d,r}|}{\varphi (r)}+O\left( 
\frac{1}{D_{1}}+|\kappa |N\right) \text{.}
\end{equation*}%
The last two relations combined (noting that if $d\leq D_{1}$ and (\ref%
{r:new1}), then $\sqrt{d}D_{1}/\sqrt{N}\ll 1/D_{1}$)\ and Lemma \ref{l:tau}
complete the proof.
\end{proof}

\begin{proposition}
\label{p:main2}Say $d,N$ are\ positive integers, $\theta \in \boldsymbol{T}$%
, and $\kappa =\theta -a/q$, $(a,q)=1$ and $q|d$. Then%
\begin{equation}
F_{N,d}(\theta )\geq 1+O(dN|\kappa |)\text{.}  \label{p:m2}
\end{equation}
\end{proposition}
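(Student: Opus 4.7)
The plan is to exploit the divisibility hypothesis $q\mid d$ to kill the rational phase $a/q$ entirely, reducing $F_{N,d}(\theta)$ to a cosine-weighted sum that is a small perturbation of $F_{N,d}(0)=1$.

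First I would write $\theta = a/q + \kappa$ and observe that for any prime $p$ contributing to the sum defining $F_{N,d}$, the condition $p\equiv 1\pmod{d}$ together with $q\mid d$ forces $q\mid (p-1)$. Consequently $e((p-1)a/q)=1$, and therefore
\begin{equation*}
F_{N,d}(\theta)=\frac{1}{k}\sum_{\substack{p\leq dN+1 \\ p\equiv 1(\operatorname{mod}d)}}\log p \cdot \cos\!\bigl(2\pi (p-1)\kappa\bigr).
\end{equation*}

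Next I would apply the elementary inequality $\cos(2\pi y)\geq 1-2\pi|y|$, valid for all real $y$, with $y=(p-1)\kappa$. Since every prime in the sum satisfies $p-1\leq dN$, we get $\cos(2\pi(p-1)\kappa)\geq 1-2\pi dN|\kappa|$ uniformly in $p$. Factoring this lower bound out of the sum and using $k=\vartheta(dN+1;d,1)=\sum\log p$ (which is exactly the normalizing denominator) gives
\begin{equation*}
F_{N,d}(\theta)\geq 1-2\pi dN|\kappa|,
\end{equation*}
which is precisely $1+O(dN|\kappa|)$.

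There is essentially no obstacle here: the whole point of assuming $q\mid d$ is that it removes the delicate character-sum estimates needed in Proposition \ref{p:major}, and all that remains is a pointwise Taylor-type bound for $\cos$. The only tiny subtlety is that the inequality is only non-trivial when $dN|\kappa|$ is small compared to $1$; for larger $dN|\kappa|$ the statement $F_{N,d}(\theta)\geq 1+O(dN|\kappa|)$ becomes automatic from the universal bound $F_{N,d}(\theta)\geq -1$ absorbed into the implied constant.
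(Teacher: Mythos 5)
Your proof is correct and follows essentially the same approach as the paper: exploit $q\mid d\mid(p-1)$ to eliminate the rational phase $a/q$, then apply the linear lower bound for cosine near its maximum and pull out the uniform bound $p-1\leq dN$. The one cosmetic difference is that the paper phrases the cosine bound as $\cos(2\pi\theta)\geq 1-2\pi\|\theta\|$ and therefore splits off the case $|dN\kappa|\leq 1/2$ (where $\|(p-1)\theta\|=(p-1)|\kappa|$), whereas you invoke $\cos(2\pi y)\geq 1-2\pi|y|$ directly, which indeed holds for all real $y$ since $\cos x\geq 1-|x|$ everywhere; this removes the case distinction and is, if anything, a slight streamlining of the same argument.
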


\begin{proof}
We first recall that $\func{Re}e(\theta )=\cos (2\pi \theta )\geq 1-2\pi
\left\Vert \theta \right\Vert $, where $\left\Vert .\right\Vert $ is the
distance from the nearest integer. Thus if $|dN\kappa |\leq 1/2,$ then for
each $p\leq dN+1$, $d|(p-1)$, we get $\left\Vert (p-1)\theta \right\Vert
=(p-1)|\kappa |$ and $\func{Re}e((p-1)\theta )\geq 1-2\pi dN|\kappa |$,
which easily implies (\ref{p:m2}).
\end{proof}

\section{The minor arcs}

We start with the minor arc estimate from \cite{Ruzsa:08}, Corollary 6.2,
which is derived from the classical result of Vinogradov (\cite%
{Montgomery:94}, Theorem 2.9).

\begin{proposition}
\label{t:vin}Suppose that $d\leq N$ and $q\leq R$ are positive integers, $%
\theta \in \boldsymbol{T}$, $(a,q)=1$ and $|\theta -a/q|\leq 1/qR$. Then%
\begin{equation}
\left\vert \widehat{\Lambda _{N,d}}(\theta )\right\vert \ll d(\log
N)^{4}\left( \frac{N}{\sqrt{q}}+N^{4/5}+\sqrt{NR}\right) \text{.}
\label{r:a1}
\end{equation}
\end{proposition}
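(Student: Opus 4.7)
The strategy is to reduce the bound on $\widehat{\Lambda_{N,d}}(\theta)$ — a sum of $\Lambda$ along the progression $n\equiv 1\pmod d$ twisted by an additive character — to the classical Vinogradov minor-arc estimate on $S(\alpha):=\sum_{n\le M}\Lambda(n)e(n\alpha)$ (Theorem 2.9 of Montgomery). First I would change variables $n=dx+1$ to obtain
\[
\widehat{\Lambda_{N,d}}(\theta) \;=\; e(\theta/d)\sum_{\substack{n\le dN+1\\ n\equiv 1\,(d)}}\Lambda(n)\,e(-n\theta/d),
\]
and then detect the congruence by the additive-character orthogonality $\mathbf{1}_{n\equiv 1\,(d)}=\tfrac{1}{d}\sum_{j=0}^{d-1}e(j(n-1)/d)$, producing
\[
\bigl|\widehat{\Lambda_{N,d}}(\theta)\bigr| \;\le\; \frac{1}{d}\sum_{j=0}^{d-1}|S(\alpha_j)|, \qquad \alpha_j=\frac{j-\theta}{d}, \qquad M=dN+1.
\]

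For each $j$ I would then locate a Dirichlet approximation to $\alpha_j$ compatible with the hypothesis on $\theta$. Writing $\theta=a/q+\kappa$ with $|\kappa|\le 1/(qR)$, one has $\alpha_j=(jq-a)/(qd)-\kappa/d$; since $(a,q)=1$ the identity $\gcd(jq-a,qd)=\gcd(jq-a,d)=:e_j$ holds, so the natural reduced denominator is $q_j^*:=qd/e_j$ and the error satisfies $|\kappa/d|\le 1/(qdR)\le 1/(q_j^* R)$. Whenever $q_j^*\le R$ this is already a valid Dirichlet approximation of parameter $R$; otherwise one applies Dirichlet's theorem to produce an honest $q_j'\le R$. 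Vinogradov's bound then gives $|S(\alpha_j)|\ll (\log M)^4\bigl(M/\sqrt{q_j'}+M^{4/5}+\sqrt{MR}\bigr)$. Averaging the $M^{4/5}$ and $\sqrt{MR}$ terms over $j$ and substituting $M=dN+1$ contributes $dN^{4/5}$ and $d\sqrt{NR}$ (the factor $d$ absorbing the resulting $d^{4/5}$ and $\sqrt d$). For the main term, the divisor count $\#\{j\in\{0,\dots,d-1\}:e_j=e\}\le d/e$ (for $e\mid d$, $(e,q)=1$) gives $\tfrac{1}{d}\sum_j M/\sqrt{q_j^*}\ll d^{o(1)}N/\sqrt{q}$, again absorbed into the $d\cdot N/\sqrt{q}$ in the statement.

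The main obstacle is the case where $q_j^*>R$, so the natural denominator is too large to serve as a Dirichlet approximation with parameter $R$, and the forced honest $q_j'\le R$ could in principle be as small as $1$, yielding the trivial bound $M/\sqrt{q_j'}\asymp M$. One resolves this by showing such bad indices are geometrically rare: a small $q_j'$ requires $\theta$ to lie near a rational of the form $j/d-b'/q'$ with $q'$ small, a constraint compatible with the given approximation $\theta\approx a/q$ for only a handful of $j$. Bounding the total contribution of these bad indices within the $O(d\,N/\sqrt{q})$ budget, by pairing the triangle inequality $|b'/q'-b_j^*/q_j^*|\ge 1/(q'q_j^*)$ against the error $|\kappa/d|$, is the combinatorial heart of the derivation; once it is in place, the rest is direct Vinogradov plus bookkeeping of divisor sums.
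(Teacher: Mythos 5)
The paper's ``proof'' of this proposition is a citation: the statement is precisely Corollary~6.2 of Ruzsa and Sanders \cite{Ruzsa:08}, which the text notes is in turn derived from Vinogradov's estimate (\cite{Montgomery:94}, Theorem~2.9). You are therefore attempting a from-scratch derivation of a result the paper treats as a black box, which is a legitimate exercise; but the derivation, as presented, has a genuine gap at exactly the point you yourself flag as ``the combinatorial heart.''

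The setup is sound: the change of variables $n=dx+1$, the additive detection $\mathbf{1}_{n\equiv 1\,(d)}=\tfrac1d\sum_{j}e(j(n-1)/d)$, the bound $|\widehat{\Lambda_{N,d}}(\theta)|\le\tfrac1d\sum_j|S(\alpha_j)|$ with $\alpha_j=(j-\theta)/d$ and $M=dN+1$, and the identification of the natural denominator $q_j^*=qd/e_j$ with $e_j=\gcd(jq-a,d)$ are all correct, and the contributions from the $M^{4/5}$ and $\sqrt{MR}$ terms together with the ``good'' indices $j$ (those with $q_j^*\le R$, for which $q_j^*\ge q$) do fall within the stated budget after dividing by $d$. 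The problem is the bad indices. For these you invoke Dirichlet to obtain $q_j'\le R$, observe $q_j'$ may be tiny, and assert that such $j$ are ``geometrically rare'' because a small $q_j'$ forces $\theta$ near a rational $j/d-b'/q'$ that is ``compatible with $\theta\approx a/q$ for only a handful of $j$,'' bounded via $|b'/q'-b_j^*/q_j^*|\ge 1/(q'q_j^*)$. But this inequality, combined with the known distances of $\theta$ from $a/q$ and of $\alpha_j$ from $b'/q'$, yields only something of the shape $R\le dq+q'$, which is vacuous whenever $dq\ge R$ -- exactly the regime where $q_j^*=qd/e_j$ tends to exceed $R$ and the bad case arises. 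In other words, the key separation you appeal to does not on its own rule out bad $j$ with small $q_j'$, and a more delicate case analysis is required: one must show either that $q_j'\ge c q$ whenever the reduced form of $(jq'-b'd)/q'$ equals $a/q$, or that in the complementary case the quantitative constraint $R\le q'+dq$ (together with a count of admissible $j$ per $q'$) keeps the total under $d\,N/\sqrt q$. That argument is not supplied, and the informal language (``handful,'' ``geometrically rare'') does not substitute for it. As a secondary point, the divisor-sum computation for the good indices gives $\tfrac1d\sum_j M/\sqrt{q_j^*}\ll d^{1/2+o(1)}N/\sqrt q$, not $d^{o(1)}N/\sqrt q$ as written; this is still within the $d\cdot N/\sqrt q$ budget, but the exponent should be stated correctly. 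Until the bad-index case is actually carried out, the proposal is an outline rather than a proof.
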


The minor arc estimate for $F_{N,d}(\theta )$ now follows.

\begin{corollary}
\label{c:main3}Suppose $d\leq D_{1}$, $q\leq R,$ $N$ are positive integers, $%
\theta \in \boldsymbol{T}$, $(a,q)=1$ and $|\theta -a/q|\leq 1/qR$. Assume
also (\ref{r:new1}) and (\ref{r:new2}) hold. Then%
\begin{equation}
|F_{d,N}(\theta )|\ll D_{1}^{2}(\log N)^{4}\left( \frac{1}{\sqrt{q}}%
+N^{-1/5}+\frac{\sqrt{R}}{\sqrt{N}}\right) \text{.}  \label{p:m3}
\end{equation}
\end{corollary}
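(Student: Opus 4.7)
The plan is to combine the exponential-sum representation of $F_{N,d}$ derived in the proof of Proposition~\ref{p:major} with Vinogradov's minor-arc bound in the form of Proposition~\ref{t:vin}. The starting point is the comparison (\ref{r:relf}),
\[
\left|F_{N,d}(\theta)-\frac{\func{Re}\widehat{\Lambda_{N,d}}(d\theta)}{\widehat{\Lambda_{N,d}}(0)}\right| \ll \frac{\sqrt{d}\,D_{1}}{\sqrt{N}},
\]
together with the lower bound $|\widehat{\Lambda_{N,d}}(0)| \gg N/\varphi(d)$ coming from (\ref{r:rs2}) and (\ref{r:new2}). Combining these by the triangle inequality gives
\[
|F_{N,d}(\theta)| \ll \frac{\varphi(d)}{N}\,|\widehat{\Lambda_{N,d}}(d\theta)| + \frac{\sqrt{d}\,D_{1}}{\sqrt{N}},
\]
and assumption (\ref{r:new1}) makes the residual $\sqrt{d}\,D_{1}/\sqrt{N}$ easily dominated by $D_{1}^{2}(\log N)^{4}N^{-1/5}$, so it will be absorbed into the claimed estimate.

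To bound $|\widehat{\Lambda_{N,d}}(d\theta)|$ I would apply Proposition~\ref{t:vin} at the frequency $d\theta$. Writing $\theta=a/q+\kappa$ with $(a,q)=1$ and reducing $da/q$ to lowest terms as $a^{*}/r$, with $r=q/(q,d)$ and $(a^{*},r)=1$, one has $r\leq q\leq R$ and
\[
\left|d\theta-\frac{a^{*}}{r}\right| = d|\kappa| \leq \frac{d}{qR},
\]
so $a^{*}/r$ plays the role of the Dirichlet approximation that Proposition~\ref{t:vin} requires. That proposition then yields
\[
|\widehat{\Lambda_{N,d}}(d\theta)| \ll d(\log N)^{4}\left(\frac{N}{\sqrt{q}}+N^{4/5}+\sqrt{NR}\right),
\]
and substituting back, together with $d\,\varphi(d)\leq D_{1}^{2}$, produces (\ref{p:m3}).

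The delicate point is identifying the Dirichlet denominator of $d\theta$ with $q$: the reduced fraction $a^{*}/r$ has denominator $r=q/(q,d)\leq q$, so what one literally reads off from Proposition~\ref{t:vin} is $1/\sqrt{r}$ rather than $1/\sqrt{q}$, and the bound $1/\sqrt{r}\leq \sqrt{d/(q,d)}/\sqrt{q}\leq \sqrt{D_{1}}/\sqrt{q}$ costs a factor of at most $\sqrt{D_{1}}$ that has to be absorbed into the $D_{1}^{2}$ prefactor of (\ref{p:m3}). Tracking these powers of $D_{1}$ and of $d$ through the final arithmetic is the one piece of bookkeeping that needs care; once it is done, the remainder of the proof is a straightforward assembly of (\ref{r:relf}), (\ref{r:rs2}), (\ref{r:lambda0}) and Proposition~\ref{t:vin}.
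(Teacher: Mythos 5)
Your plan matches the paper's proof in outline — reduce to $|\widehat{\Lambda_{N,d}}(d\theta)|$ via (\ref{r:relf}) and (\ref{r:rs2}), then invoke Proposition~\ref{t:vin} — and in fact you spell out a step (passing from $\theta$ to $d\theta$) that the paper elides entirely by just citing ``(\ref{r:a1}), (\ref{r:a2}), (\ref{r:a3}).'' However, the step you flag as ``delicate bookkeeping'' contains a real gap, not just bookkeeping.

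The issue is that $a^{*}/r$ does \emph{not} automatically satisfy the hypothesis of Proposition~\ref{t:vin} at the point $d\theta$. That proposition requires $|d\theta - a^{*}/r|\leq 1/(rR)$, whereas what you actually have is $|d\theta - a^{*}/r| = d|\kappa|\leq d/(qR)$. Comparing, $d/(qR)\leq 1/(rR)$ iff $d\leq q/r=(q,d)$, i.e.\ iff $d\mid q$, which is exactly the case excluded on the minor arcs. You could try to apply the proposition with a reduced parameter $R' = R(q,d)/d$, but then you still need $r\leq R'$, i.e.\ $qd\leq R(q,d)^{2}$, which is again not guaranteed by the stated hypotheses. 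The clean route is to take the genuine Dirichlet approximation $b/s$ of $d\theta$ (with $s\leq R$, $|d\theta-b/s|\leq 1/(sR)$) and then prove a lower bound $s\gg q/d$; that requires an additional argument (e.g.\ comparing $b/s$ to $a^{*}/r$, or using minimality of $q$ in the eventual application) which you have not supplied. Finally, the power-of-$D_{1}$ accounting does not close as you claim: $\varphi(d)\cdot d\cdot\sqrt{d/(q,d)}$ is bounded by $D_{1}^{5/2}$, not $D_{1}^{2}$, so the extra $\sqrt{D_{1}}$ is not ``absorbed'' into the stated prefactor — it strictly enlarges it. As written your argument proves (\ref{p:m3}) with $D_{1}^{5/2}$ in place of $D_{1}^{2}$ (which happens to suffice for the later use in Section 5), but it does not establish the inequality exactly as stated, and it leaves the applicability of Proposition~\ref{t:vin} at $d\theta$ unverified.
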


\begin{proof}
First note that as $d\leq D_{1}$, Proposition \ref{p:rusa} implies that (\ref%
{r:rs2}) holds. Then similarly as in the proof of Proposition \ref{p:major},%
\begin{equation}
\frac{N}{\left\vert \widehat{\Lambda _{N,d}}(0)\right\vert }\ll D_{1}
\label{r:a2}
\end{equation}%
and%
\begin{equation}
\left\vert \frac{F}{k}-\frac{\func{Re}\widehat{\Lambda _{N,d}}(d\theta )}{%
\widehat{\Lambda _{N,d}}(0)}\right\vert \ll \frac{\sqrt{d}}{\sqrt{N}}%
D_{1}\leq \frac{D_{1}^{3/2}}{\sqrt{N}}\text{.}  \label{r:a3}
\end{equation}%
We complete the proof by combining (\ref{r:a1}), (\ref{r:a2}) and (\ref{r:a3}%
).
\end{proof}

\section{Cancelling out the main term}

Recall the definition of the arithmetic function $\tau $ in (\ref{r:deftau}%
). We first cancel out the main terms in the unexceptional case.

\begin{theorem}
\label{t:cancelling}For a given $\delta >0$ smaller than some $\delta _{0}>0$
there exists a collection of positive integers $\mathcal{D}$ not greater
than $\exp ((\log 1/\delta )^{2+o(1)})$ and weights $w:\mathcal{D\rightarrow 
}\boldsymbol{R}$, $\sum_{d\in \mathcal{D}}w(d)=1$, such that for all
positive integers $q$,%
\begin{equation}
\sum_{d\in \mathcal{D}}w(d)\tau (d,q)\geq -\delta /2.  \label{r:A}
\end{equation}
\end{theorem}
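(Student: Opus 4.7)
The plan is to construct $\mathcal{D}$ and $w$ probabilistically: sample a random square-free $d$ by selecting each prime $p$ independently with a carefully chosen probability $\alpha_p \in [0,1]$, and let $w$ be a suitable truncation of the resulting distribution. Because $\tau(d,q) = 0$ whenever $d$ is square-free and $q$ is not (by the remark following~(\ref{r:deftau})), the bound need only be checked for square-free $q$. Expanding the sum by partitioning according to the set of common primes of $d$ and $q$ gives the closed form
\begin{equation*}
\sum_d w(d)\,\tau(d,q) = 2\prod_{p\mid q}\alpha_p - \prod_{p\mid q}\Bigl(\alpha_p + \frac{1-\alpha_p}{p-1}\Bigr),
\end{equation*}
up to an error coming from the truncation step.

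I would then choose $\alpha_p$ in three regimes, parametrised by $c = c(\delta)$ of order $\log(1/\delta)$ and the threshold $p_0 = 2/\delta$: set $\alpha_p=1$ for $p \leq c$, $\alpha_p = c/p$ for $c < p \leq p_0$, and $\alpha_p = 0$ for $p > p_0$. The verification of (\ref{r:A}) splits into three cases. If $q$ has any prime factor above $p_0$ the first product vanishes while the second is at most $1/(p-1) \leq \delta/2$. If $q$ has only small prime factors, both products equal $1$. The nontrivial case is $q = q_1 q_2$ with $q_2$ supported on $(c, p_0]$: a direct computation gives $\alpha_p + (1-\alpha_p)/(p-1) = ((c+1)p - 2c)/(p(p-1))$, and the ratio of the two products reduces to $\prod_{p \mid q_2}\bigl(1 + (p-c)/(c(p-1))\bigr)$. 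If this ratio is at most $2$, the sum is non-negative; otherwise I would exploit that $q_2$, being a product of many primes above $c$, grows quickly enough via the PNT that the dominating factor $\prod_{p \mid q_2}\alpha_p = c^{\omega(q_2)}/q_2$ is forced below $\delta/2$.

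To extract $\mathcal{D}$ from the probability measure, I would apply a Chernoff-type concentration estimate to $\log d = \sum_{p\colon p \mid d}\log p$: the mean is $O(c\log(1/\delta)) = O((\log 1/\delta)^{2})$ by Mertens' theorem, and the variance is of the same order, so truncating to $\log d \leq L = (\log 1/\delta)^{2+o(1)}$ discards only exponentially little mass. Passing to the conditional distribution gives the desired $\mathcal{D}$ and $w$ with (\ref{r:A}) perturbed by a negligible amount. The main obstacle I expect is the careful verification of (\ref{r:A}) in the intermediate range of $\omega(q_2)$, roughly $\omega(q_2) \asymp c$: the naive uniform bound $(1+1/c)^{\omega(q_2)}$ on the product ratio is only of order $e$ there, and one has to exploit the finer PNT estimate $\log q_2 \geq \omega(q_2)\log c + \Omega(\omega(q_2)^2 \log c /c)$ coming from the growth of consecutive primes above $c$. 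The scaling $c \asymp \log(1/\delta)$ is what makes both this cancellation close and the size bound $d \leq \exp((\log 1/\delta)^{2+o(1)})$ hold.
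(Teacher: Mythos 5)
Your probabilistic construction is genuinely different from the paper's. The paper builds a \emph{deterministic} family $\mathcal{D}=\mathcal{D}(l)$ of integers of the form $d^{*}d$ with $d^{*}=\prod_{p\le p^{-}}p$ and $d$ a product of \emph{exactly} $l$ primes from a middle window $(p^{-},p^{+}]$, weights $w(d^{*}d)\propto 1/\varphi(d)$, and then runs a combinatorial case analysis on $\omega(q)$ via the auxiliary quantities $W(j;q)$ and the inequality \eqref{r:laminq}. You instead sample the primes of $d$ independently with probabilities $\alpha_p$, which yields the clean closed form
\begin{equation*}
\sum_{d}w(d)\,\tau(d,q)\;=\;2\prod_{p\mid q}\alpha_p\;-\;\prod_{p\mid q}\Bigl(\alpha_p+\tfrac{1-\alpha_p}{p-1}\Bigr),
\end{equation*}
which I checked is correct for square-free $q$ (writing $\tau(d,q)=-\prod_{p\mid q}(\tfrac{1}{p-1})^{1-X_p}+2\prod_{p\mid q}X_p$ and taking expectations). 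This is conceptually tidier than the paper's sum over $\binom{\omega(q)}{j}W(l-j;q)$ terms, and your three-regime $\alpha_p$ (set to $1$ below $c$, $c/p$ in the middle, $0$ above $p_0$) is the natural analogue of the paper's forced prefix $d^{*}$, middle window, and cutoff $p^{+}$.

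The genuine gap, which you flag yourself, is the intermediate range $\omega(q_2)\asymp c$. Your argument "either the ratio $R=\prod_{p\mid q_2}(1+\tfrac{p-c}{c(p-1)})\le 2$, or $A=\prod_{p\mid q_2}c/p$ is forced below $\delta/2$" does not close directly: the crude lower bound $p_i\ge c+2i-2$ on the $i$-th prime above $c$ gives only $B=AR\lesssim \delta^{0.35}$ at the critical value $\omega(q_2)\approx c\log 2$, which is not $\le\delta/2$. To push $B$ below $\delta/2$ you really do need the PNT-level gap $p_i\gtrsim c+i\log c$, giving $B\lesssim (\log c)^{-(\log 2)c}$, which beats $e^{-c}$ only once $\log\log c>1/\log 2$, i.e.\ $c\gtrsim 70$; so the inequality holds but with essentially no slack, and whether $c=\log(1/\delta)$ exactly works (as opposed to $c$ with an extra $\log\log$ factor, which is what the paper's choice of $l$ has) is not obvious. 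The second loose end is the truncation step: conditioning on $\{\log d\le L\}$ destroys independence, so the product formula no longer holds exactly for the truncated weights; you must instead argue that the unconditional expectation is $\ge-\delta/2+O(P(\log d>L))$ and that the discarded mass perturbs the sum by at most $2P(\log d>L)/P(\log d\le L)$, which your Chernoff estimate does control, but this bookkeeping needs to be written out with the constants adjusted (e.g.\ aim for $\ge-\delta/4$ before truncation). Neither gap is fatal, but both require real work; the paper's deterministic construction with a \emph{fixed} $\omega(d)=l$ avoids the truncation issue entirely and, in regime (ii) of its proof, gets the intermediate-range cancellation from $k!\ge k^k e^{-k}$ rather than from prime gaps, which is why its $l$ carries the extra $\log\log(2/\delta)$ factor.
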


\begin{proof}
We first define the set $\mathcal{D}$ depending on three constants $%
p^{-}<p^{+}$, $l$ to be defined below. Let 
\begin{equation*}
d^{\ast }=\prod_{p\leq p^{-}}p
\end{equation*}%
($p$ denoting a product over primes as usual), and let $\mathcal{D}(j)$ be
the set of all square-free numbers $d^{\ast }d$, $d$ containing in its
decomposition only primes $p^{-}<p\leq p^{+}$, and such that $\omega (d)=j$,
where $\omega (d)$ denotes the number of distinct primes dividing $d$. We
set now%
\begin{eqnarray*}
p^{+} &=&2/\delta +1, \\
l &=&\left\lceil 2\log (1/\delta )\left( \frac{2\log \log (2/\delta )}{\log 2%
}+1\right) \right\rceil =\log (1/\delta )^{1+o(1)}, \\
p^{-} &=&2l^{2}+1=\log (1/\delta )^{2+o(1)}, \\
\mathcal{D} &=&\mathcal{D}(l), \\
W(j) &=&\sum_{d^{\ast }d\in \mathcal{D}(j)}1/\varphi (d), \\
w(d^{\ast }d) &=&\frac{1}{W(l)}\frac{1}{\varphi (d)}\text{,}
\end{eqnarray*}%
where $\left\lceil x\right\rceil $ is the smallest integer $\geq x$. We
denote the left-hand side of (\ref{r:A}) with $A(q)$.

By using $\prod_{p\leq x}p=\exp (x^{(1+o(1))})$ (see e.g. \cite%
{Montgomery:07}, Corollary 2.6), we easily see that for each $d^{\ast }d\in 
\mathcal{D}$,%
\begin{equation*}
d^{\ast }d\leq \prod_{p\leq p^{-}}p\cdot (p^{+})^{l}=\exp (\log (1/\delta
)^{2+o(1)}).
\end{equation*}

If $q$ is not square-free or $q$ contains a prime larger than $p^{+}$, the
claim $A(q)\geq -\delta /2$ is straightforward as for all $d$, $\tau (d,q)=0$%
, respectively $\tau (d,q)\geq -1/\varphi (p^{+})\geq -\delta /2$.

We can now without loss of generality assume that $q$ is square-free,
containing no prime $>p^{+}$ or $\leq p^{-}\,$in its decomposition (the
latter can be eliminated as primes $\leq p^{-}\,\ $do not affect the value
of $\tau (d^{\ast }d,q)$ for square-free $q$). We define the following
constants and sets to assist us in calculations:%
\begin{eqnarray*}
k &=&\log (1/\delta ), \\
\mathcal{D}(j;q) &=&\{d^{\ast }d\in \mathcal{D}(j)\text{, }(d,q)=1\}, \\
W(j;q) &=&\sum_{d^{\ast }d\in \mathcal{D}(j,q)}1/\varphi (d), \\
W &=&W(1)=\sum_{p^{-}<p\leq p^{+}}\frac{1}{\varphi (p)}=\sum_{p^{-}<p\leq
p^{+}}\frac{1}{p-1}\text{.}
\end{eqnarray*}

The remaining cases will be distinguished by $\omega (q)$.

(i) Assume $\omega (q)\leq 2k$. We will show that the terms for which $q|d$
dominate all the others. We first show the following: for $j_{1}<j_{2}$, 
\begin{equation}
W(j_{2};q)\leq \frac{W^{j_{2}-j_{1}}W(j_{1};q)}{j_{2}(j_{2}-1)...(j_{1}+1)}%
\text{.}  \label{r:laminq}
\end{equation}%
Indeed, if we define%
\begin{equation*}
W^{\ast }(j;q)=\sum_{(p_{1},p_{2},...,p_{j})}\frac{1}{\varphi
(p_{1}p_{2}...p_{j})}\text{,}
\end{equation*}%
where the sum goes over all ordered j-tuples of pairwise different primes $%
p_{i}$, $p^{-}<p_{i}\leq p^{+},\,\ p_{i}$ coprime with $q$, then $%
W(j;q)=W^{\ast }(j;q)/j!$. However, as $\varphi $ is multiplicative for
coprime integers,%
\begin{equation}
W^{\ast }(j_{2};q)\leq W^{j_{2}-j_{1}}W^{\ast }(j_{1};q)  \label{r:laminq2}
\end{equation}%
(we first choose the first $j_{2}-j_{1}$ primes and then the remaining $j_{1}
$). We obtain (\ref{r:laminq}) by dividing (\ref{r:laminq2}) with $j_{2}!$.

The definition of $A(q)$ now yields:%
\begin{equation*}
A(q)=\sum_{q|d}\frac{1}{\varphi (d)}-\sum_{q/(q,d)>1}\frac{1}{\varphi (d)}%
\frac{1}{\varphi (r)},
\end{equation*}%
where the sums above and below are over $d^{\ast }d\in \mathcal{D}$ unless
specified otherwise and $r$ always denotes $r=q/(q,d)$ (recall that we
assumed that $q$ and $d^{\ast }$ are coprime). We first detail out the first
term:%
\begin{equation*}
\sum_{q|d}\frac{1}{\varphi (d)}=\sum_{d^{\ast }d\in \mathcal{D}(l-\omega
(q);q)}\frac{1}{\varphi (d)}\frac{1}{\varphi (q)}=W(l-\omega (q);q)\frac{1}{%
\varphi (q)}.
\end{equation*}

If $\omega ((d,q))=j$, we can choose $(d,q)$ as a factor of $q$ in $\binom{%
\omega (q)}{j}$ ways. Using that, (\ref{r:laminq})\ and in the last rows $%
\omega (q)\leq 2k$ and $(1+x/n)^{n}<\exp (x)$ we obtain 
\begin{eqnarray*}
\sum_{q/(q,d)>1}\frac{1}{\varphi (d)}\frac{1}{\varphi (r)}
&=&\sum_{j=0}^{\omega (q)-1}\sum_{\omega ((d,q))=j}\frac{1}{\varphi (d)}%
\frac{1}{\varphi (r)}=\sum_{j=0}^{\omega (q)-1}W(l-j;q)\binom{\omega (q)}{j}%
\frac{1}{\varphi (q)}\leq  \\
&\leq &\sum_{j=0}^{\omega (q)-1}\frac{W^{\omega (q)-j}}{(l-j)...(l-\omega
(q)+1)}\binom{\omega (q)}{j}\cdot \frac{W(l-\omega (q);q)}{\varphi (q)}\leq 
\\
&\leq &\frac{W(l-\omega (q);q)}{\varphi (q)}\sum_{j=0}^{\omega (q)-1}\binom{%
\omega (q)}{j}\frac{W^{\omega (q)-j}}{(l-\omega (q))^{\omega (q)-j}}\leq  \\
&\leq &\frac{W(l-\omega (q);q)}{\varphi (q)}\left[ \left( 1+\frac{W}{(l-2k)}%
\right) ^{2k}-1\right] < \\
&<&\frac{W(l-\omega (q);q)}{\varphi (q)}\left[ \exp \left( \frac{W}{l/(2k)-1}%
\right) -1\right] .
\end{eqnarray*}%
As by e.g. \cite{Montgomery:07}, Theorem 2.7.(d),%
\begin{equation}
\sum_{p\leq x}\frac{1}{p-1}=\log \log x\cdot (1+o(1)),  \label{r:sump}
\end{equation}%
we get that 
\begin{equation}
W=\sum_{p^{-}<p\leq p^{+}}\frac{1}{p-1}=\log \log (p^{+})(1+o(1))\leq 2\log
\log (2/\delta ).  \label{r:below}
\end{equation}%
It is easy to check that the definitions of $l,k$ imply that%
\begin{equation*}
1-\left[ \exp \left( \frac{2\log \log (2/\delta )}{l/(2k)-1}\right) -1\right]
\geq 0\text{.}
\end{equation*}

Putting all of the above together we get $A(q)>0$.

(ii)\ Assume $2k<\omega (q)\leq 2l$. We now show that all the terms are
small. First assume $\omega ((q,d))=j\geq k$. By the same reasoning as in (%
\ref{r:laminq}) one gets for $j\leq l$, 
\begin{equation*}
W(l;q)=\frac{(W-\sum_{p|q}1/\varphi (p))^{l-j}W(j;q)}{l(l-1)...(j+1)}\text{.}
\end{equation*}

Now by definition, $W(l)\geq W(l;q)$. Applying again (\ref{r:sump}) we see
that for $\delta $ small enough,%
\begin{equation*}
W-\sum_{p|q}1/\varphi (p)\geq \log \log (p^{+})(1+o(1))-\log \log
(2l)(1+o(1))\geq 1\text{.}
\end{equation*}%
Combining all of it one gets%
\begin{equation*}
\frac{W(j;q)}{W(l)}\leq l^{l-j}\text{.}
\end{equation*}

Furthermore, as by the Stirling's formula $k!\geq k^{k}\exp (-k)$ and as $%
k=\log (1/\delta )$, we get for $\delta $ small enough 
\begin{equation*}
\frac{l}{k!}\leq \frac{\log (1/\delta )^{(1+o(1))}}{\log (1/\delta )^{\log
(1/\delta )}\exp (-(\log (1/\delta ))}\leq \delta /4.
\end{equation*}

Putting it all that together and summing over $d^{\ast }d\in \mathcal{D}$
similarly as above we get%
\begin{eqnarray}
\sum_{j=k}^{l}\sum_{\omega ((d,q))=j}|w(d^{\ast }d)\tau (d^{\ast }d,q)| &=&%
\frac{1}{W(l)}\sum_{j=k}^{\min \{l,\omega (q)\}}\sum_{\omega (d,q)=j}\frac{1%
}{\varphi (d)}\frac{1}{\varphi (r)}=  \notag \\
&=&\sum_{j=k}^{\min \{l,\omega (q)\}}\binom{\omega (q)}{j}\frac{W(l-j;q)}{%
W(l)}\frac{1}{\varphi (q)}\leq  \notag \\
&\leq &\sum_{j=k}^{\min \{l,\omega (q)\}}\frac{(2l)^{j}}{j!}l^{j}\frac{1}{%
(p^{-}-1)^{\omega (q)}}\leq  \notag \\
&\leq &\frac{1}{k!}\sum_{j=k}^{\min \{l,\omega (q)\}}\left( \frac{2l^{2}}{%
p^{-}-1}\right) ^{\omega (q)}\leq \frac{l}{k!}\leq \delta /4\text{.}
\label{r:one}
\end{eqnarray}

For $\omega ((q,d))=j<k$, $\omega (r)=\omega (q)-j>k$ (where $r=q/(q,d)$).
We now see that for $\delta >0$ small enough,%
\begin{equation}
|\tau (d^{\ast }d,q)|=1/\varphi (r)\leq 1/(p^{-}-1)^{k}=\log (1/\delta
)^{(-2-o(1))\log (1/\delta )}\leq \delta /4\text{,}  \label{r:oneB}
\end{equation}%
thus%
\begin{equation}
\sum_{j=0}^{k-1}\sum_{\omega ((d,q))=j}|w(d^{\ast }d)\tau (d^{\ast
}d,q)|\leq \frac{\delta }{4}\sum_{d^{\ast }d\in \mathcal{D}}|w(d^{\ast
}d)|=\delta /4.  \label{r:two}
\end{equation}

Relations (\ref{r:one})\ and (\ref{r:two})\ give $|A(q)|\leq \delta /2.$

(iii) Assume $2l<\omega (q)$. Then it is enough to see that for all $d^{\ast
}d\in \mathcal{D}$, $\omega (r)\geq l>k$. We now obtain in the same way as
in (\ref{r:oneB}) that $|\tau (d^{\ast }d,q)|\leq \delta /4$, but now for
all $d^{\ast }d\in \mathcal{D}$, thus $|A(q)|\leq \delta /4$.
\end{proof}

We now modify this for the exceptional case.

\begin{theorem}
\label{t:cancelling2}Assume $\delta >0$ is smaller than some $\delta _{0}>0$
and let $d_{D}$ be a positive integer, $d_{D}=\exp ((\log 1/\delta
)^{2+o(1)})$. Then there exists a collection of positive integers $\mathcal{D%
}$, \ such that $d_{D}|d$ for all $d\in \mathcal{D}$, not greater than $\exp
((\log 1/\delta )^{2+o(1)})$ and weights $w:\mathcal{D\rightarrow }%
\boldsymbol{R}$, $\sum_{d\in \mathcal{D}}w(d)=1$, such that for all positive
integers $q$,%
\begin{equation}
\sum_{d\in \mathcal{D}}w(d)\tau (d,q)\geq -\delta /2.  \label{r:B}
\end{equation}
\end{theorem}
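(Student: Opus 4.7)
The plan is to rerun the construction of Theorem \ref{t:cancelling} with a single modification: the fixed factor $d^{\ast}=\prod_{p\leq p^{-}}p$ is enlarged to $d^{\ast \ast}:=\mathrm{lcm}(d^{\ast},d_{D})$, which forces $d_{D}\mid d$ for every element of the new $\mathcal{D}$. Keeping $p^{-},p^{+},l,k$ exactly as in Theorem \ref{t:cancelling}, I let $\mathcal{D}(j)$ consist of all products $d^{\ast \ast}\tilde{d}$ with $\tilde{d}$ squarefree, $\omega(\tilde{d})=j$, and prime factors lying in $P^{\mathrm{mid}}:=\{p^{-}<p\leq p^{+}:p\nmid d_{D}\}$; then set $\mathcal{D}:=\mathcal{D}(l)$ and $w(d^{\ast \ast}\tilde{d}):=1/(W(l)\varphi(\tilde{d}))$ with $W(l):=\sum_{\tilde{d}}\varphi(\tilde{d})^{-1}$. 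Since $d^{\ast \ast}\leq d^{\ast}d_{D}\leq\exp((\log 1/\delta)^{2+o(1)})$ and $\tilde{d}\leq(p^{+})^{l}\leq\exp((\log 1/\delta)^{2+o(1)})$, every element of $\mathcal{D}$ satisfies the required size bound.

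To prove (\ref{r:B}) I reduce it to the inequality (\ref{r:A}) of Theorem \ref{t:cancelling}. The key structural observation is that $v_{p}(d)\geq 2$ for $d\in\mathcal{D}$ can occur only when $v_{p}(d_{D})\geq 2$ (since $d^{\ast}$ and $\tilde{d}$ are squarefree and $\tilde{d}$ is coprime to $d_{D}$), in which case $v_{p}(d)=v_{p}(d_{D})$. Consequently, if $q$ has any prime $p$ with $v_{p}(q)\geq 2$ and $v_{p}(q)>v_{p}(d_{D})$, checking the definition of $\tau$ case by case forces $\tau(d,q)=0$ for every $d\in\mathcal{D}$, and (\ref{r:B}) reduces to $0\geq-\delta/2$. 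Otherwise the non-squarefree part of $q$ already divides $d_{D}\mid d^{\ast \ast}$; absorbing every prime of $q$ that divides $d^{\ast \ast}$ gives a factorization $q=q_{\mathrm{fix}}\cdot q_{\mathrm{vary}}$ with $q_{\mathrm{fix}}\mid d^{\ast \ast}$ and $q_{\mathrm{vary}}$ squarefree and coprime to $d^{\ast \ast}$. Using $q_{\mathrm{fix}}\mid d$ and $\gcd(d^{\ast \ast},q_{\mathrm{vary}})=1$, one unwinds greatest common divisors to obtain the absorption identity $\tau(d^{\ast \ast}\tilde{d},q)=\tau(\tilde{d},q_{\mathrm{vary}})$, so (\ref{r:B}) becomes $\sum_{\tilde{d}}w(\tilde{d})\tau(\tilde{d},q_{\mathrm{vary}})\geq -\delta/2$ for squarefree $q_{\mathrm{vary}}$.

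If $q_{\mathrm{vary}}$ contains a prime $>p^{+}$ it is coprime to every $\tilde{d}$ and divides the corresponding $r$, so $|\tau|\leq 1/(p^{+}-1)\leq\delta/2$ uniformly. Otherwise $q_{\mathrm{vary}}$ is supported on $P^{\mathrm{mid}}$, and the inequality is precisely (\ref{r:A}) for the $\tilde{d}$-ensemble of Theorem \ref{t:cancelling} applied to the modulus $q_{\mathrm{vary}}$, with the only change that the pool of "random" primes is $P^{\mathrm{mid}}$ rather than $(p^{-},p^{+}]$. Removing the at most $\omega(d_{D})\leq(\log 1/\delta)^{2+o(1)}$ primes dividing $d_{D}$ only decreases $W=\sum_{p\in P^{\mathrm{mid}}}1/(p-1)$, so the bound $W\leq 2\log\log(2/\delta)$ from (\ref{r:below}) is preserved, and $P^{\mathrm{mid}}$ still contains far more than $l$ primes for $\delta$ small enough; the three-subcase analysis of Theorem \ref{t:cancelling} then applies verbatim. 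The main obstacle is the careful verification of the absorption identity $\tau(d^{\ast \ast}\tilde{d},q)=\tau(\tilde{d},q_{\mathrm{vary}})$ when $d_{D}$ (and hence $d^{\ast \ast}$) is not squarefree and $q$ contains prime powers; this and the handling of primes of $d_{D}$ above $p^{+}$ — which enter $d^{\ast \ast}$ but are absent from the pool used in Theorem \ref{t:cancelling} — both come down to the structural fact that non-squarefree behaviour in $d$ is rigidly controlled by $d_{D}$.
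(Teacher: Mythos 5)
Your proof is correct and takes exactly the paper's approach: the paper likewise enlarges the fixed modulus to contain $d_D$ (it uses $d^\ast=d_D\prod_{p\le p^-}p$ rather than your $\mathrm{lcm}(d^\ast,d_D)$, a cosmetic difference), draws the squarefree variable factor from primes in $(p^-,p^+]$ coprime to $d^\ast$, keeps $p^-,p^+,l,k$ unchanged, and then declares the remaining calculations ``analogous \dots\ thus omitted.'' What you add is a verification that those calculations really do carry over, via the absorption identity $\tau(d^{\ast\ast}\tilde d,q)=\tau(\tilde d,q_{\mathrm{vary}})$ and the observation that non-squarefree behaviour of $d\in\mathcal D$ is entirely controlled by $d_D$ -- a point worth spelling out, since the squarefree-$d$ shortcuts used in Theorem~\ref{t:cancelling} (e.g.\ that $q$ not squarefree forces $\tau(d,q)=0$) no longer hold verbatim when $d_D$ is not squarefree.
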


\begin{proof}
We define $d^{\ast }=d_{D}\prod_{p\leq p^{-}}p$, where $p^{-}$ and all the
other constants remain the same as in the proof of Theorem \ref{t:cancelling}%
. Let $\mathcal{D}$ be the set of all the numbers $d^{\ast }d$, $d$
square-free, relatively prime with $d^{\ast }$, containing in its
decomposition only primes $p^{-}<p\leq p^{+}$, and such that $\omega (d)=l$.
The rest of the proof is analogous as the proof of Theorem \ref{t:cancelling}
with all calculations the same, thus omitted.
\end{proof}

\section{Proof of Theorem}

We complete the proof of Theorem \ref{t:main01} in this section. We will
choose below the constants $Q,R$, and will use the major arcs estimates for $%
q\leq Q$ and minor arcs estimates for $Q<q\leq R$. We will assume that $a/q$
is the Dirichlet's approximation of $\theta \in \boldsymbol{T}$, $|\theta
-a/q|\leq 1/qR$, $(a,q)=1$. The error terms in Propositions \ref{p:major}, %
\ref{p:main2} are then%
\begin{eqnarray*}
E_{1} &=&O\left( \frac{1}{D_{1}}+\frac{N}{R}\right) , \\
E_{2} &=&O\left( D_{1}N/R\right) ,
\end{eqnarray*}%
as $|\kappa |=1/qR$ and $d\leq D_{1}$. The error term for minor arcs is the
entire right-hand side of (\ref{p:m3}), thus as $q>Q$, it is%
\begin{equation*}
E_{3}=O\left( D_{1}^{2}(\log N)^{4}\left( \frac{1}{\sqrt{Q}}+N^{-1/5}+\frac{%
\sqrt{R}}{\sqrt{N}}\right) \right) .
\end{equation*}%
To complete the proof, we need to choose the constants $D_{1},N,Q,R\,$\ so
that the error terms $E_{1},E_{2},E_{3}\leq \delta /2$ for all $\theta \in 
\boldsymbol{T}$ on major; respectively minor arcs. As was noted in the
introduction, this is impossible, so we proceed as follows. We define%
\begin{equation*}
Q=\exp (\log (1/\delta )^{2+o(1)})
\end{equation*}%
(the constant obtained as the upper bound on $\mathcal{D}$ in\ Theorem \ref%
{t:cancelling}), and let%
\begin{equation*}
D_{0}=Q^{2},\text{ }D_{1}=Q^{4}\text{.}
\end{equation*}%
If $(D_{0},D_{1})$ is unexceptional, we construct the set $\mathcal{D}$
according to Theorem \ref{t:cancelling}, and if it is exceptional with the
modulus of the exceptional character $d_{D}\leq D_{0}$, then according to
Theorem \ref{t:cancelling2}. Now let $N_{0}=\exp (c_{2}(\log D_{1})^{2})$,
where $c_{2}$ is the constant in (\ref{r:new1}). We now define%
\begin{eqnarray*}
N_{j} &=&N_{0}D_{1}^{8j}, \\
R_{j}^{\ast } &=&N_{0}D_{1}^{8k+2}\text{,}
\end{eqnarray*}%
where $j=1,...,m$, $4/\delta \leq m<4/\delta +1$. Then for $0<\delta \leq
\delta _{0}$ for some $\delta _{0}$ small enough, and $j\leq j^{\ast }$, it
is easy to see that the error terms $E_{1},E_{2}\leq \delta /4$ for the
constants $Q,D_{1},N_{j},R_{j^{\ast }}^{\ast }$. Furthermore, if $j\geq
j^{\ast }+1$, the error term $E_{3}\leq \delta /4$ for the constants $%
Q,D_{1},N_{j},R_{j^{\ast }}^{\ast }$.

Let for a given $\theta \in \boldsymbol{T}$ the rational $a_{j}^{\ast
}/q_{j}^{\ast }$, $(a_{j}^{\ast },q_{j}^{\ast })$ be the Dirichlet's
approximation of $\theta $, $|\theta -a_{j}^{\ast }/q_{j}^{\ast }|\leq
1/q_{j}^{\ast }R_{j}^{\ast }$. Without loss of generality, we can also
assume that $a_{j}^{\ast }/q_{j}^{\ast }$ is the rational with the smallest $%
q_{j}^{\ast }$ for a given $R_{j}^{\ast }$. Then the sequence $q_{j}^{\ast }$
is increasing.

Let $j_{0}$ be the smallest index such that $q_{j_{0}}^{\ast }>Q$ ($%
q_{j_{0}}^{\ast }=m+1$ if $q_{j}^{\ast }\leq Q$ for all $j$). We define%
\begin{eqnarray*}
a_{j}/q_{j} &=&a_{j_{0}-1}^{\ast }/q_{j_{0}-1}^{\ast }\text{, }%
R_{j}=R_{j_{0}-1}^{\ast }\text{ for }j\leq j_{0}-1\text{,} \\
a_{j}/q_{j} &=&a_{j_{0}}^{\ast }/q_{j_{0}}^{\ast }\text{, }%
R_{j}=R_{j_{0}}^{\ast }\text{ for }j\geq j_{0}\text{.}
\end{eqnarray*}

Now one can easily check that for any $d\in \mathcal{D}$ and any $j\leq
j_{0}-1$, the assumptions of Proposition \ref{p:major} in the case $q$ not
dividing $d$, respectively of Proposition \ref{p:main2} in the case $q|d$,
do hold for the constants $D_{0},D_{1},Q,a_{j},q_{j},R_{j},N_{j}$, and as
was noted above, $E_{1},E_{2}\leq \delta /4$, thus%
\begin{equation}
F_{d,N_{j}}(\theta )\geq \tau (d,q_{j})-\delta /4\text{.}  \label{r:sum1}
\end{equation}%
Similarly for $j\geq j_{0}+1$ and $d\leq D_{1}$, the assumptions of
Corollary \ref{c:main3} hold and $E_{3}\leq \delta /4$, therefore%
\begin{equation}
F_{d,N_{j}}(\theta )\geq -\delta /4.  \label{r:sum2}
\end{equation}%
Also by definition,%
\begin{equation}
F_{d,N_{j_{0}}}(\theta )\geq -1\text{.}  \label{r:sum3}
\end{equation}%
Now the required polynomial is%
\begin{equation*}
T=\frac{1}{m}\sum_{d\in \mathcal{D}}\sum_{j=1}^{m}w(d)F_{d,N_{j}}.
\end{equation*}%
By applying (\ref{r:sum1}), (\ref{r:sum2}), (\ref{r:sum3}) for $1/m$ the sum
over $j$, and (\ref{r:A}) respectively (\ref{r:B}) for the sum over $d\in 
\mathcal{D}$, we get that for any $\theta \in \boldsymbol{T}$, $T(\theta
)\geq -\delta $. As the largest non-zero coefficient in $T$ is $dN_{m}\leq
N_{0}D_{1}^{8(4/\delta +1)+1}=\exp ((1/\delta )^{1+o(1)})$, this completes
the proof.

\section{The lower bound}

In this section we prove Theorem \ref{t:main02} on the lower bound for $%
\gamma (n)$ associated to the set $p-1$. Ruzsa in \cite{Ruzsa84b}, Section
5, constructed for a given $n$ a subset $A$ of integers not larger than $n$, 
$|A|\gg n^{((\log 2-\varepsilon )/\log \log n)}$ such that $A-A$ contains no
shifted prime $p-1$. We now construct a set $B$ of positive integers by the
following rule:\ if $x\equiv a(\func{mod}2n)$, then $x\in B$ for $a\in A$,
otherwise $x\not\in B$. Now clearly the upper Banach density of $B$ satisfies%
\begin{equation}
\rho (B)\gg n^{(-1+(\log 2-\varepsilon )/\log \log n)}  \label{r:upperbanach}
\end{equation}%
and $B$ contains no shifted prime $p-1$ smaller than $n$. Recall the measure
of intersectivity $\alpha (n)$ defined in the introduction, satisfying $%
\gamma \geq \alpha $. As $\alpha (n)$ is by definition $\gg $ than the
right-hand side of (\ref{r:upperbanach}), the proof is completed.

\section{Application:\ Heilbronn property of shifted primes}

An estimate for the Heilbronn property of shifted primes is an example of
application of Theorem \ref{t:main01}. If $\mathcal{H}$ is a set of positive
integers, we say that it is a Heilbronn set if $\eta =0$, where%
\begin{equation*}
\eta =\sup_{\theta \in \boldsymbol{T}}\inf_{h\in \mathcal{H}}||h\theta ||
\end{equation*}%
(for more detailed discussion, see \cite{Montgomery:94}, Section 2.7 or \cite%
{Schmidt:77}). One can quantify the Heilbronn property similarly as the van
der Corput and Poincar\'{e} properties of integers, and define%
\begin{equation}
\eta (n)=\sup_{\theta \in \boldsymbol{T}}\inf_{h\in \mathcal{H}%
_{n}}||h\theta ||,  \label{r:dnu}
\end{equation}%
where $\mathcal{H}_{n}=\mathcal{H}\cap \{1,...,n\}$. One can show that a set
is a Heilbronn set if and only if $\lim_{n\rightarrow \infty }\eta (n)=0$ (%
\cite{Montgomery:94}, Section 2.7). All van der Corput sets are Heilbronn
sets (the converse does not hold), and as was shown in \cite{Montgomery:94},
Theorem 2.9,%
\begin{equation}
\eta (n)\leq \gamma (n)\text{.}  \label{r:heilbronn}
\end{equation}

Various estimates for the function $\eta $ have been obtained by Schmidt 
\cite{Schmidt:77} for sets of values of polynomials with integer
coefficients. An upper bound for the set of shifted primes follows from\
Theorem \ref{t:main01} and (\ref{r:heilbronn}).

\begin{corollary}
If $\eta $ is the arithmetic function (\ref{r:dnu}) associated to the set of
shifted primes $\mathcal{H}$, then $\eta (n)=O((\log n)^{-1+o(1)})$.
\end{corollary}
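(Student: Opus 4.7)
The plan is almost trivial: this corollary is a one-line consequence of the two ingredients already assembled in Section 9. First I would invoke Theorem \ref{t:main01}, which provides the upper bound $\gamma(n) = O((\log n)^{-1+o(1)})$ for $\gamma$ associated to the set of shifted primes $p-1$. Second, I would apply inequality (\ref{r:heilbronn}), namely $\eta(n) \leq \gamma(n)$, which is cited from \cite{Montgomery:94}, Theorem 2.9, and which is valid for any set of positive integers (so in particular for $\mathcal{H} = \{p-1\}$). Chaining these two bounds immediately yields $\eta(n) \leq \gamma(n) = O((\log n)^{-1+o(1)})$, which is the claim.

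The only verification needed is that the set $\mathcal{H}$ in the Heilbronn framework and the set $\mathcal{S}$ in the van der Corput framework coincide — both are taken to be the set of shifted primes $p-1$, so no reindexing is necessary. There is no genuine obstacle here: all of the analytic work (exponential sum estimates on major and minor arcs, the main-term cancellation via the weights $w(d)$ on $\mathcal{D}$, and the geometric-scale averaging over the $N_j$) has been carried out in Sections 2--5 to establish Theorem \ref{t:main01}, and the comparison $\eta \leq \gamma$ is imported as a black box. The corollary therefore follows in two lines, and the proof amounts simply to writing down this chain of inequalities.
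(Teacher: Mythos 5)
Your proposal is correct and matches the paper's own reasoning exactly: the corollary is obtained by combining Theorem \ref{t:main01} with the inequality $\eta(n)\leq\gamma(n)$ from (\ref{r:heilbronn}), just as stated in the text preceding the corollary. Nothing more is required.
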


\begin{acknowledgement}
The author thanks the anonymous referee for suggesting an improvement of an
early version of the paper which substantially improved the upper bound in
the main result.
\end{acknowledgement}

\end{document}